 %-------------- Original Version -----------------------------------------
\documentclass[a4paper]{article}
\usepackage[utf8]{inputenc}
\usepackage{amsthm}
\usepackage{authblk} % for author / affiliation
\usepackage{hyperref}

\usepackage{todonotes}
\usepackage[english]{babel}
\usepackage{amsmath,amssymb}
\usepackage{aliascnt} % Used for references like "theorem 3.4"
\DeclareMathAlphabet{\mymathbb}{U}{BOONDOX-ds}{m}{n} % for mathbb 1
\usepackage{xcolor} %for gray in table
\usepackage{colortbl}% http://ctan.org/pkg/xcolor %color table
\usepackage{caption}
\captionsetup[table]{skip=0pt} %To reduce the spacing between caption and table
\usepackage{marvosym}%for corresponding author letter

%-------------------------------------------------------------
% Operators, Counters, ...
%-------------------------------------------------------------

\DeclareMathOperator{\STAB}{STAB}
\DeclareMathOperator{\SSTAB}{SSTAB}
\DeclareMathOperator{\ThetaBody}{TH}
\DeclareMathOperator{\CompressedThetaBody}{CTH}
\DeclareMathOperator{\conv}{conv}
\DeclareMathOperator{\trace}{trace}
\DeclareMathOperator{\diag}{diag}
\DeclareMathOperator{\st}{s.t.}

%%%%% basic math stuff %%%%
\newcommand{\R}{{\mathbb R}}

\newcommand{\Nzero}{{\mathbb N}_{0}}
\newcommand{\Sym}[1]{{\mathcal S}_{#1}}
\newcommand{\simplex}{{\Delta}}
\newcommand{\allones}[1]{\mymathbb{1}_{#1}}
\newcommand{\allzeros}[1]{\mymathbb{0}_{#1}}
\newcommand{\transposedVec}{T}

%%%%% exact subgraph constraints %%%%
\newcommand{\GI}{G_{I}}
\newcommand{\kI}{k_{I}}

\newcommand{\tI}{t_{I}}

\newcommand{\XI}{X_{I}}
\newcommand{\Jk}[1]{J_{#1}}

\newcommand{\SIi}{S^{I}_{i}}
\newcommand{\sIi}[1]{s^{I}_{#1}}
\newcommand{\SIiIndex}[1]{S^{I}_{#1}}

\newcommand{\lambdaI}{\lambda^{I}}
\newcommand{\lambdaIi}[1]{\lambdaI_{#1}}
\newcommand{\lambdaICi}[1]{{\lambdaI_{#1}}'}

\newcommand{\zESCHG}[1]{z^{\mathcal{E}}_{#1}(G)}
\newcommand{\zCESCHG}[1]{z^{\mathcal{C}}_{#1}(G)}
\newcommand{\zSESCHG}[1]{z^{\mathcal{S}}_{#1}(G)}

\newcommand{\zESCJ}{z^{\mathcal{E}}_{J}(G)}
\newcommand{\zCESCJ}{z^{\mathcal{C}}_{J}(G)}
\newcommand{\zSESCJ}{z^{\mathcal{S}}_{J}(G)}

\newcommand{\zESCJnoG}{z^{\mathcal{E}}_{J}}
\newcommand{\zCESCJnoG}{z^{\mathcal{C}}_{J}}

\newcommand{\zESCJk}{z^{\mathcal{E}}_{J_k}(G)}
\newcommand{\zCESCJk}{z^{\mathcal{C}}_{J_k}(G)}
\newcommand{\zSESCJk}{z^{\mathcal{S}}_{J_k}(G)}
\newcommand{\xOpt}{x^\ast}
\newcommand{\XOpt}{X^\ast}

\newcommand{\XOptC}{X'}
\newcommand{\XOptI}{\XOpt_I}
\newcommand{\XOptCI}{\XOptC_I}

\newcommand{\GZeroi}[1]{G^{0}_{#1}}
\newcommand{\VZeroi}[1]{V^{0}_{#1}}
\newcommand{\EZero}{E^{0}}

\newcommand{\lambdaN}{\lambda}
\newcommand{\lambdaS}{\widetilde{\lambda}}

% ------------- Theorems, other stuff

%\newtheorem{theorem}{Theorem}[section]
\newtheorem{theorem}{Theorem}

%\newaliascnt{lemma}{theorem}
%\newtheorem{lemma}[lemma]{Lemma}
%\aliascntresetthe{lemma}
\newtheorem{lemma}{Lemma}

%\newaliascnt{example}{theorem}
%\newtheorem{example}[example]{Example}
%\aliascntresetthe{example}
\newtheorem{example}{Example}

%\newaliascnt{definition}{theorem}
%\newtheorem{definition}[definition]{Definition}
%\aliascntresetthe{definition}
\newtheorem{definition}{Definition}

%\newaliascnt{corollary}{theorem}
%\newtheorem{corollary}[corollary]{Corollary}
%\aliascntresetthe{corollary}
\newtheorem{corollary}{Corollary}

%\newaliascnt{observation}{theorem}
%\newtheorem{observation}[observation]{Observation}
%\aliascntresetthe{observation}
\newtheorem{observation}{Observation}

% Keywords command
\providecommand{\keywords}[1]
{
    \small	
    \textbf{Keywords:} #1
}

\title{On different Versions of the Exact Subgraph Hierarchy for the Stable Set 
	Problem%
	\thanks{This research was supported by the Austrian Science Fund (FWF): I 
	3199-N31 and by the Johannes
		Kepler University Linz, Linz Institute of Technology (LIT): 
		LIT-2021-10-YOU-216.}
}

\author[1]{Elisabeth Gaar$^{(\text{\Letter})}$}
\affil[1]{University of Augsburg, Germany 
	\& Johannes Kepler University Linz, Austria, %\newline
	%Johannes Kepler University Linz, 
	%Altenberger Stra{\ss}e~69, \newline
	%4040 Linz,
	%Alpen-Adria-Universitaet Klagenfurt, 
    %Universitaetsstrasse 65-57, 
    %Klagenfurt A-9020, 
    %Austria,  
    \href{mailto:elisabeth.gaar@uni-a.de}{elisabeth.gaar@uni-a.de}}
\date{}

%MSC class
% 90C27   	Combinatorial optimization
% 90C22   	Semidefinite programming

\begin{document}
    
\def\myAbstract{
	Let $G$ be a graph with $n$ vertices and $m$ edges.
	One of several hierarchies towards the stability number of $G$ is the exact 
	subgraph hierarchy (ESH).
	On the first level it computes 
	the Lov\'{a}sz theta 
	function $\vartheta(G)$
	as semidefinite program (SDP) with a matrix variable of order $n+1$ and 
	$n+m+1$ constraints. 
	On the $k$-th level it adds all exact subgraph constraints (ESC) 
	for subgraphs of order $k$ to the SDP.
	An ESC ensures that the submatrix of the 
	matrix variable corresponding to the subgraph is in the correct 
	polytope.
	By including only some ESCs into the SDP
	the ESH  
	can be exploited  computationally.
	
	In this paper we introduce a variant of the ESH that 
	computes $\vartheta(G)$
	through an SDP with a 
	matrix variable of order $n$ and $m+1$ constraints.
	We show that it makes sense to include the ESCs into this SDP and 
	introduce the compressed ESH (CESH) analogously to the ESH.
	Computationally the CESH seems favorable as the SDP is smaller. 
	However, we prove that the bounds based on the ESH are always at 
	least as good as those of the CESH.  
	In computational experiments sometimes they are significantly better.
	
	We also introduce scaled ESCs (SESCs), 
	which are a more natural way to include exactness 
	constraints into the 
	smaller SDP and we prove that including an SESC is equivalent to 
	including an ESC for every subgraph.
}

\maketitle
\begin{abstract}%
		\myAbstract
		\keywords{Semidefinite programming,  Relaxation hierarchy, Stable set}
\end{abstract}

%\tableofcontents

\section{Introduction}
\label{sec:intro}

One of the most fundamental problems in combinatorial optimization is the 
stable set problem.
Given a graph $G = (V,E)$, a subset of vertices $S \subseteq V$ is called  
stable set if no two vertices of $S$ are adjacent. 
A stable set is called maximum stable set if there is no stable set with larger 
cardinality. 
The cardinality of a maximum stable set is called stability number of $G$ and 
denoted by $\alpha(G)$.
The stable set problem asks for a stable set of size $\alpha(G)$.  
It is an NP-hard and well-studied problem, see for example 
the survey of Bomze, Budinich, 
Pardalos and Pelillo~\cite{SurveyCliqueProblem}.

Typically NP-complete combinatorial optimization problems are solved using 
branch-and-bound or branch-and-cut algorithms.   
%The bounds needed therein can be obtained for example by considering convex 
%relaxations of 
%the original problem, 
%where the feasible region is extended from several integral points to the 
%convex hull of those points or even larger sets that can be accessed and 
%described easily.
%These relaxations are very often easy to compute, but typically do not 
%give strong 
%bounds.
One type of relaxations used in order to obtain bounds are those based on 
semidefinite programming (SDP),
%In a semidefinite program (SDP) one wants to optimize a linear function in the 
%entries 
%of a matrix variable such that linear equalities and inequalities are 
%satisfied 
%and the matrix is positive semidefinite, 
see Helmberg~\cite{HelmbergSdpBible} 
for an introduction.
SDPs can be solved to arbitrary precision in polynomial time 
and numerous SDP solvers are available.

Lov\'{a}sz~\cite{LovaszStart} laid the foundations 
for SDP relaxations in 1979 by introducing the Lov\'{a}sz theta function 
$\vartheta(G)$ of a graph $G$, which fulfills
\begin{equation*}
\alpha(G) \leqslant \vartheta(G) \leqslant \chi(\overline{G})
\end{equation*}
for every graph 
$G$, where $\chi(\overline{G})$ is the chromatic number 
of the complement graph~$\overline{G}$ of $G$. 
Among his formulations of $\vartheta(G)$ 
was an SDP with a matrix variable of order $n$ and $m+1$ equality 
constraints. We will give the rigorous definition of this SDP 
in Section~\ref{sec:Compare2ThetaFunction}
and refer to it as~\eqref{lovaszTheta_n}.
As a result, $\vartheta(G)$ can be calculated in 
polynomial time, even though 
it is sandwiched between $\alpha(G)$ and $\chi(\overline{G})$, which are both 
NP-complete to compute.
Later Gr{\"o}tschel, Lov{\'a}sz and 
Schrijver~\cite{OurUsedFormOfLovasTheta} provided an alternative formulation 
of  $\vartheta(G)$ as 
SDP with a matrix variable of order $n+1$ and $n+m+1$ equality constraints.
In Section~\ref{sec:DefStab} we will give the rigorous definition of this 
SDP and refer to it as~\eqref{lovaszTheta_nPlus1}.

In 1995 Goemans and Williamson~\cite{GoemansWilliamson} presented an SDP 
relaxation for the Max-Cut problem which is a provenly good 
approximation.
Since then SDP relaxations have been used for various 
combinatorial optimization problems and several ways of further 
tightening them have been developed.
Also  
hierarchies, that consist of several levels, were established, for example 
by Lov\'asz and 
Schrijver~\cite{LovaszSchrijverHierarchy} and by 
Lasserre~\cite{LasserreHierarchy}. 
At the first level a simple relaxation is considered, 
and the higher the level gets, 
the tighter the bounds become. 
Usually the computational power it takes to evaluate the level of the 
hierarchy increases on each level and 
often the computation of higher levels is beyond reach.
They major drawback of most of the SDP based hierarchies is 
that the order of the matrix variable increases 
enormously with each level.

In 2015 Adams, Anjos, Rendl and Wiegele~\cite{AARW} introduced the exact 
subgraph hierarchy (ESH) for combinatorial optimization problems that have an 
SDP relaxation. 
They discussed the ESH for the Max-Cut problem 
and briefly described it for the stable set problem.
Here the first level of the hierarchy boils down 
to~\eqref{lovaszTheta_nPlus1}.
They introduced exact subgraph constraints (ESC), 
which ensure
that the submatrix of the matrix variable in~\eqref{lovaszTheta_nPlus1} 
corresponding to a subgraph 
is in the so-called squared stable set polytope. 
If the problem is solved exactly the submatrix has to be in this polytope, 
hence the ESC forces the subgraph to be exact. 
On the $k$-th level of the ESH the ESC for all subgraphs of order $k$ 
are included into~\eqref{lovaszTheta_nPlus1}.
This implies that the order of the matrix variable  
remains 
$n+1$ 
on each level of the ESH. 
Gaar and 
Rendl~\cite{elli-diss, GaarRendl, GaarRendlFull} 
computationally exploit the ESH and 
relaxations of it for the 
stable set, the Max-Cut  
and the coloring problem.

To summarize, the ESH from~\cite{AARW} starts from~$\vartheta(G)$ 
formulated as~\eqref{lovaszTheta_nPlus1} and adds ESCs on higher 
levels.
As $\vartheta(G)$ has two SDP 
formulations~\eqref{lovaszTheta_nPlus1} and~\eqref{lovaszTheta_n}, it is 
a natural question whether it makes sense to build a hierarchy by starting 
from $\vartheta(G)$ formulated as~\eqref{lovaszTheta_n} 
and adding ESCs.
It is the aim of this paper to investigate this natural question, which is 
even more interesting in the light of a 
a recent work by
Galli and Letchford~\cite{GalliLetchford}, who compared the behavior 
of~\eqref{lovaszTheta_nPlus1} and~\eqref{lovaszTheta_n} when they are 
strengthened or weakened and who showed that the obtained bounds do not 
always 
coincide.

In this paper we show that 
it makes sense to consider this new hierarchy, which we newly introduce as
 compressed (because the SDP is smaller) ESH (CESH).
We prove that both the ESH and the 
CESH are equal to $\vartheta(G)$ on the first level and equal to $\alpha(G)$ on 
the $n$-th level.
Furthermore, the SDP has a smaller matrix variable and and fewer 
constraints, so intuitively the CESH is computationally favorable.
However, we prove that the bounds obtained by including an ESC 
into~\eqref{lovaszTheta_nPlus1} are always at least as good as those obtained 
from including the same ESC into~\eqref{lovaszTheta_n}, demonstrating that 
the bounds obtained from the ESH are at least as good as those from the CESH.
Furthermore, it turns out in our computational comparison that the 
bounds 
are 
sometimes significantly worse for the CESH, 
but the running times do not significantly 
decrease. Hence, we confirm that the ESH has the better trade-off 
between the quality of the bound and the running time.

The intuition behind the SDP~\eqref{lovaszTheta_n} is different than the one 
of~\eqref{lovaszTheta_nPlus1}, in particular for the solutions 
 representing stable sets. 
We show in this paper that there is an alternative intuitive definition of 
exact subgraphs 
for~\eqref{lovaszTheta_n}. 
This leads to our new definition of scaled ESCs (SESC)
and our introduction of another new hierarchy, the scaled ESH (SESH).
We prove that SESCs coincide with the 
original ESCs for~\eqref{lovaszTheta_n},
which implies that the ESH and the SESH coincide.

To summarize, in this paper we confirm that even though our new 
hierarchies based on exactness
%alternative definition of the ESH would 
seem more intuitive and computational favorable, with off the 
shelve SDP solvers it is the 
best option to consider the ESH in the way it has been done so far.
Our findings are in accordance with the results of~\cite{GalliLetchford}, 
where it is observed that~\eqref{lovaszTheta_nPlus1} typically gives stronger 
bounds when strengthened.

The rest of the paper is organized as follows.
In Section~\ref{sec:eshOriginal} we give rigorous definitions of ESCs and the 
ESH and explain how they can be exploited computationally.
In Section~\ref{esc:compressedESH} we introduce the CESH 
and compare it to the ESH, also in the light of the results 
of~\cite{GalliLetchford}.
Then we introduce SESCs in Section~\ref{sec:scaledESC} and 
investigate how they are related to the ESCs.
In Section~\ref{sec:computations} we present computational results and 
 we conclude our paper in Section~\ref{sec:conclusions}.

We use the following notation. 
We denote by $\Nzero$ the natural numbers starting with $0$.
By $\allones{d}$ and $\allzeros{d}$ we denote the vector or matrix of all ones 
and all zeros of size $d$, respectively. Furthermore, by $\Sym{n}$ we 
denote 
the set of symmetric matrices in $\mathbb{R}^{n \times n}$.
We denote the convex hull of a set $S$ by $\conv(S)$
and the trace of a matrix $X$ by $\trace(X)$. 
Moreover, $\diag(X)$ extracts the main diagonal of the matrix $X$ into a 
vector.
By $x^{\transposedVec}$ and $X^{\transposedVec}$ we denote the transposed of 
the vector $x$ and the matrix $X$, respectively.
Moreover, we denote the $i$-th entry of the vector $x$ by $x_{i}$
and the  entry of $X$ in the 
$i$-th row and the $j$-th column by $X_{i,j}$.
Furthermore, we denote the inner product of two vectors $x$ and $y$  by 
$\left\langle x, y \right \rangle = x^{\transposedVec}y$. The inner 
product of two matrices $X = (X_{i,j})_{\substack{1 \leqslant i,j \leqslant 
n}}$ and $Y = (Y_{i,j})_{\substack{1 \leqslant i,j \leqslant n}}$ is defined as
%\begin{equation*}
$
\left\langle X,Y \right\rangle = \sum_{i = 1}^{n} \sum_{j = 1}^{n} 
X_{i,j}Y_{i,j}
$.
%\end{equation*}
Furthermore, the $t$-dimensional simplex is given as
%\begin{equation*}
$
\simplex_{t} = \left\{\lambda \in \R^{t}:
\sum_{i=1}^{t} \lambda_{i} = 1,~
 \lambda_i \geqslant 0 \quad \forall 1 \leqslant i \leqslant t\right\}
$.
%\end{equation*}

%-------------------------------------------------------------
% content
%-------------------------------------------------------------
\section{The Exact Subgraph Hierarchy}
\label{sec:eshOriginal}

In this section we recall exact subgraph constraints and the exact 
subgraph hierarchy for combinatorial optimization problems that have 
an SDP relaxation introduced  
by Adams, Anjos, Rendl and 
Wiegele in 2015~\cite{AARW}. 
We detail everything for the stable set problem, because 
in~\cite{AARW} they 
focused on Max-Cut. %
Besides motivation and definitions, we provide new examples, discuss the 
representation of exact subgraph 
constraints and compare the exact 
	subgraph hierarchy to other hierarchies from the literature.

\subsection{Lov\'asz Theta Function}
\label{sec:DefStab}

We start by presenting the Lov\'asz theta function. To to so,
it is handy to consider 
the 
incidence vectors of stable sets and the polytope they span.

\begin{definition}
    \label{def:STAB}
    Let $G = (V,E)$ be a graph with  $|V|=n$ and $V = \{1, \dots, n\}$. Then 
    the set of all 
    stable set 
    vectors  $\mathcal{S}(G)$ and the stable set polytope $\STAB(G)$ are 
    defined as 
    \begin{align*}
    \mathcal{S}(G) &= \left\{ s \in \{0,1\}^{n} : s_{i}s_{j} = 0 \quad \forall 
    \{i,j\} \in E\right\} \text{ and}\\ 
    \STAB(G) &= \conv\left\{s : s \in \mathcal{S}(G)\right\}.
    \end{align*}  
\end{definition}

It is easy to see that the stability number $\alpha(G)$ is obtained by 
solving 
\begin{equation*}
\alpha(G) = \max_{s \in \mathcal{S}(G)} \allones{n}^{\transposedVec}s = \max_{s 
\in \STAB(G)} \allones{n}^{\transposedVec}s,
\end{equation*}
but unfortunately $\STAB(G)$ is very hard to describe in general.
Several linear relaxations of $\STAB(G)$ have been considered, 
like the so-called 
fractional stable set polytope and the clique constraint stable set polytope. 
We refer to~\cite{OurUsedFormOfLovasTheta} for further details.

We focus on another relaxation, namely the Lov\'asz theta function 
$\vartheta(G)$, which is an upper bound on~$\alpha(G)$.
Gr\"otschel, Lov\'asz and 
Schrijver~\cite{OurUsedFormOfLovasTheta} proved 
\begin{alignat}{3}
\tag{$T_{n+1}$}
\label{lovaszTheta_nPlus1}
\vartheta(G) =
& \max & \allones{n}^{\transposedVec}x \phantom{iiii}  \\ \nonumber
& \st &  \diag(X) &= x\\ \nonumber
&& X_{i,j} &= 0 && \forall \{i,j\} \in E \\ \nonumber                  
&& \left(\begin{array}{cc}
1  & x^{\transposedVec} \\
x & X
\end{array}\right) &\succcurlyeq 0\\ \nonumber
&& X \in \Sym{n}&,~x \in \mathbb{R}^{n}
\end{alignat}
and hence provided an SDP formulation of $\vartheta(G)$.
This SDP has a matrix variable of order $n+1$. 
Furthermore, there are $m$ constraints of the form $X_{i,j} = 0$, $n$ 
constraints to make sure that $\diag(X) = x$ 
 and one constraint ensures 
that in the matrix of order $n+1$ the entry in 
 the first row and 
first column is equal to~$1$. Hence, there are $n+m+1$ linear equality 
constraints in~\eqref{lovaszTheta_nPlus1}. 

To formulate~\eqref{lovaszTheta_nPlus1} in a more compact 
way we observe the well-known fact that 
$ X-xx^{\transposedVec} \succcurlyeq 0$ if and only if 
$\left(\begin{array}{cc}
1  & x^{\transposedVec} \\
x & X
\end{array}\right) \succcurlyeq 0$, see 
Boyd and Vandenberghe~\cite[Appendix 
A.5.5]{BoydVandenbergheConvexOptimization}
on Schur complements. 
Thus, the feasible region of~\eqref{lovaszTheta_nPlus1} is
\begin{align*}
\ThetaBody^{2}(G) &= \left\{ (x,X) \in \R^{n}\times\Sym{n} \colon \diag(X) = x, 
\right.\\ 
& \left. \qquad \qquad X_{i,j}=0 \quad \forall \{i,j\} \in E, \quad
X-xx^{\transposedVec} 
\succcurlyeq 0 
\right\}.
\end{align*}
Clearly for each element $(x,X)$ of $\ThetaBody^{2}(G)$ the 
projection of $X$ onto its main diagonal is $x$. The set of all projections 
\begin{equation*} 
\ThetaBody(G) = \left\{ x \in \R^{n}\colon \exists X \in \Sym{n} : (x,X) \in 
\ThetaBody^{2}(G) \right\}
\end{equation*}
is called theta body. More information on $\ThetaBody(G)$ can be found for 
example in Conforti, Cornuejols and 
Zambelli~\cite{ConfortiCornuejolsZambelliIntegerProgramming}.
It is easy to see that
%\begin{equation*}
$
\STAB(G) \subseteq \ThetaBody(G)
$
%\end{equation*}
holds for every graph $G$, see~\cite{OurUsedFormOfLovasTheta}. 
Thus, $\vartheta(G)$ is a relaxation of $\alpha(G)$.

\subsection{Introduction of the Exact Subgraph Hierarchy}
\label{sec:DefESH}

In order to present the exact subgraph hierarchy we need a modification of 
the stable set polytope $\STAB(G)$, namely the squared stable set polytope.

\begin{definition}
    \label{def:STAB2}
    Let $G = (V,E)$ be a graph. The squared stable set polytope 
    $\STAB^{2}(G)$ of $G$ is defined as
    \begin{equation*}
    \STAB^{2}(G) = \conv\left\{ss^{\transposedVec}: s \in 
    \mathcal{S}(G)\right\}.
    \end{equation*}
    The matrices of the form $ss^{\transposedVec}$ for $s \in 
    \mathcal{S}(G)$ are called stable set matrices.
\end{definition}

Note that the elements of $\STAB(G)$ are vectors in 
$\R^{n}$, whereas the elements of $\STAB^{2}(G)$ are matrices in  $\R^{n\times 
n}$.
In comparison to $\STAB(G)$ the structure of $\STAB^{2}(G)$ is more 
sophisticated and less studied. 
Only if $G$ has no edges  
a projection 
of $\STAB^{2}(G)$ coincides with 
a well-studied object, the boolean quadric 
polytope, see  
Padberg~\cite{PadbergBQP}.
In particular,  
by putting the upper triangle with the main diagonal into a vector for 
all 
elements of $\STAB^{2}(G)$ we obtain the elements of the boolean quadric 
polytope.

Let us now turn back to $\vartheta(G)$. The 
following lemma  
turns out to be the key ingredient for defining the exact subgraph hierarchy.

\begin{lemma}
    \label{lem:SdpWithESCOfSizeNIsAlpha}
    If we add the constraint $X \in \STAB^{2}(G)$ 
    into~\eqref{lovaszTheta_nPlus1} for a graph~$G$, 
    then the optimal objective function value is $\alpha(G)$, so
    \begin{equation}
    \label{eq:SdpWithESCOfSizeNIsAlpha}
    \alpha(G) = 
     \max \left\{ 
    \allones{n}^{\transposedVec}x :
    (x,X) \in \ThetaBody^2(G),~  
    X \in \STAB^{2}(G) \right\}. 
    \end{equation}
\end{lemma}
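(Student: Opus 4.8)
The plan is to establish the two inequalities $\alpha(G)\leqslant\rho$ and $\rho\leqslant\alpha(G)$ separately, where $\rho$ denotes the optimal value of the right-hand side of~\eqref{eq:SdpWithESCOfSizeNIsAlpha}.

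For $\alpha(G)\leqslant\rho$ I would exhibit a feasible point attaining the value $\alpha(G)$. Pick a maximum stable set $S$ with incidence vector $s\in\mathcal{S}(G)$, so that $\allones{n}^{\transposedVec}s=|S|=\alpha(G)$, and set $(x,X)=(s,ss^{\transposedVec})$. Since the entries of $s$ lie in $\{0,1\}$ we have $s_i^2=s_i$, hence $\diag(ss^{\transposedVec})=s=x$; the stable-set property $s_is_j=0$ for $\{i,j\}\in E$ yields $X_{i,j}=0$ on all edges; and $X-xx^{\transposedVec}=ss^{\transposedVec}-ss^{\transposedVec}=0\succcurlyeq 0$ (equivalently, the bordered matrix is positive semidefinite by \autoref{lem:X-xxPsdIffBigMatrixPsd}). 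Thus $(s,ss^{\transposedVec})\in\ThetaBody^2(G)$, and $X=ss^{\transposedVec}\in\STAB^{2}(G)$ by \autoref{def:STAB2}, so this point is feasible for the strengthened SDP and has objective value $\alpha(G)$.

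For $\rho\leqslant\alpha(G)$ I would take an arbitrary feasible pair $(x,X)$. The constraint $X\in\STAB^{2}(G)$ means $X=\sum_{k=1}^{t}\lambda_k s_k s_k^{\transposedVec}$ for suitable $s_k\in\mathcal{S}(G)$ and $\lambda\in\simplex_{t}$. Using once more that each $s_k$ is $0/1$-valued, $\diag(s_k s_k^{\transposedVec})=s_k$, so the constraint $\diag(X)=x$ forces $x=\sum_{k}\lambda_k s_k$, i.e. $x$ is a convex combination of stable set vectors. Consequently $\allones{n}^{\transposedVec}x=\sum_{k}\lambda_k\allones{n}^{\transposedVec}s_k\leqslant\sum_{k}\lambda_k\alpha(G)=\alpha(G)$, because $\allones{n}^{\transposedVec}s_k=|S_k|\leqslant\alpha(G)$ for every stable set $S_k$. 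Combining the two inequalities gives~\eqref{eq:SdpWithESCOfSizeNIsAlpha}.

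There is no genuine obstacle in this proof; the only point requiring care is the repeated use of the identity $\diag(ss^{\transposedVec})=s$ for $0/1$ vectors $s$ together with its extension by linearity to convex combinations, since this is exactly what ties the matrix variable $X$ back to $\STAB(G)$. It is also worth noting that the upper bound uses only the constraints $\diag(X)=x$ and $X\in\STAB^{2}(G)$, whereas for the lower bound one must verify all constraints of~\eqref{lovaszTheta_nPlus1} for the candidate point $(s,ss^{\transposedVec})$.
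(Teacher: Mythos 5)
Your proof is correct and follows essentially the same route as the paper: the paper reformulates the whole SDP as a linear program over the simplex $\simplex_t$ by writing $X=\sum_i\lambda_i s_is_i^{\transposedVec}$ and observing that the remaining constraints of~\eqref{lovaszTheta_nPlus1} are then automatic, which packages your two inequalities (feasibility of $(s,ss^{\transposedVec})$ for a maximum stable set, and the bound $\sum_k\lambda_k\allones{n}^{\transposedVec}s_k\leqslant\alpha(G)$) into a single step. The key identities you highlight, $\diag(ss^{\transposedVec})=s$ and the rank-one decomposition of the bordered matrix, are exactly the ones the paper uses.
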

%\newpage
\begin{proof}
	Let $(P^{\mathcal{E}})$ be  the SDP on the 
	right-hand side of~\eqref{eq:SdpWithESCOfSizeNIsAlpha}, 
	let $z^{\mathcal{E}}$ be its optimal objective function value 
	and let $\mathcal{S}(G) = \{s_{1}, \dots, s_{t}\}$. 
	
	Let without loss of generality $s_t$ be the incidence vector of a maximum 
	stable set of $G$. Then clearly $x = s_t$ and $X = s_ts_t^{\transposedVec}$ 
	is feasible for $(P^{\mathcal{E}})$ and has objective function value 
	$\alpha(G)$, 
	so $\alpha(G) \leqslant z^{\mathcal{E}}$ holds.
	
	Furthermore, any feasible solution $(x,X)$ of $(P^{\mathcal{E}})$ can be 
	written as 
	\begin{equation*}
	X = \sum_{i=1}^{t} \lambda_{i}s_{i}s_{i}^{\transposedVec}
\end{equation*} 
for some $\lambda \in \simplex_{t}$ because $X \in \STAB^{2}(G)$ holds.
Thus, $x$ can be written as 
\begin{equation*}
x = \diag(X) = \diag\left(\sum_{i=1}^{t} 
\lambda_{i}s_{i}s_{i}^{\transposedVec}\right) = \sum_{i=1}^{t} \lambda_{i}s_{i}.
\end{equation*}
In consequence, the objective function value of $(x,X)$ for $(P^{\mathcal{E}})$ 
is 
equal to
\begin{equation*}
\allones{n}^{\transposedVec}x
= 
\allones{n}^{\transposedVec}\sum_{i=1}^{t} \lambda_{i}s_{i}
=
\sum_{i=1}^{t} \lambda_{i}\allones{n}^{\transposedVec}s_{i}
\leqslant 
\sum_{i=1}^{t} \lambda_{i}\alpha(G)
=
\alpha(G)
\end{equation*}
and hence $z^{\mathcal{E}} \leqslant \alpha(G)$ holds, which finishes the proof.
\end{proof}

Lemma~\ref{lem:SdpWithESCOfSizeNIsAlpha} implies that if we add the constraint 
$X \in \STAB^{2}(G)$ to~\eqref{lovaszTheta_nPlus1}, then we get the best 
possible bound on 
$\alpha(G)$,
namely $\alpha(G)$. 
Unfortunately, depending on the representation of the constraint, we either 
include an 
exponential number of new variables 
(if we use a formulation as convex hull) or 
inequality constraints 
(if we include inequalities representing facets of $\STAB^2(G)$, 
see Section~\ref{sec:ESCAsInequalities}) 
into the SDP.
In order to only partially include  
$X \in \STAB^{2}(G)$ we exploit a property of 
stable sets, namely that  
a stable set of~$G$ induces also a stable set in each subgraph 
of $G$. 
To formalize this in an observation, we first need the following 
definition.

\begin{definition}
    Let $I \subseteq V$ be a subset of the vertices of the graph $G = (V,E)$ 
    with 
    $|V| = n$ and let $\kI = |I|$. 
    We denote by $\GI$ the subgraph of $G$ that is induced by~$I$.  
    Furthermore, we denote by $\XI=(X_{i,j})_{i,j\in I}$ the submatrix of 
    $X\in\R^{n\times n}$ which is indexed by~$I$.
\end{definition}

\begin{observation}
    \label{lem:XInSTABGImpliesXIInSTABGI}
    Let $G = (V,E)$ be a graph. Then
    \begin{equation*}
    X \in \STAB^{2}(G) \quad \Leftrightarrow \quad \XI \in \STAB^{2}(\GI) \quad 
    \forall I \subseteq V.
    \end{equation*}
\end{observation}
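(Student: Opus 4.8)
The plan is to prove the two implications separately. The implication ``$\Leftarrow$'' is immediate: specialising the universally quantified statement on the right-hand side to $I = V$ gives $\GI = G$ and $\XI = X$, so the assumption $\XI \in \STAB^{2}(\GI)$ becomes $X \in \STAB^{2}(G)$.

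For ``$\Rightarrow$'', I would suppose $X \in \STAB^{2}(G)$. Writing $\mathcal{S}(G) = \{s_{1}, \dots, s_{t}\}$, by \autoref{def:STAB2} there is a $\lambda \in \simplex_{t}$ with $X = \sum_{i=1}^{t} \lambda_{i} s_{i} s_{i}^{\transposedVec}$. Fix an arbitrary $I \subseteq V$ and pass to the submatrix indexed by $I$ on both sides; since $(s_{i} s_{i}^{\transposedVec})_{I} = (s_{i})_{I} (s_{i})_{I}^{\transposedVec}$, this yields $\XI = \sum_{i=1}^{t} \lambda_{i} (s_{i})_{I} (s_{i})_{I}^{\transposedVec}$ with the same coefficients $\lambda \in \simplex_{t}$. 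It then remains to check that each $(s_{i})_{I} (s_{i})_{I}^{\transposedVec}$ is a stable set matrix of $\GI$, that is, that $(s_{i})_{I} \in \mathcal{S}(\GI)$: the vector $(s_{i})_{I}$ lies in $\{0,1\}^{\kI}$, and for every edge $\{j,k\}$ of $\GI$ we also have $\{j,k\} \in E$, hence $(s_{i})_{j}(s_{i})_{k} = 0$, which is exactly the required product condition on the relevant entries of $(s_{i})_{I}$. Thus $\XI$ is a convex combination of stable set matrices of $\GI$, so $\XI \in \STAB^{2}(\GI)$; as $I$ was arbitrary, this proves the statement.

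The only thing to be careful about is bookkeeping: that the coefficient vector $\lambda$ carries over unchanged (so that membership in $\simplex_{t}$ is preserved), and that the fact ``a stable set of $G$ restricted to $I$ is a stable set of $\GI$'' is invoked for the \emph{induced} subgraph $\GI$, so that no edge constraint of $\GI$ is overlooked. I do not expect any genuine obstacle here; the statement is essentially the observation that convex combinations and the stable-set property both commute with passing to principal submatrices.
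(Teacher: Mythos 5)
Your proof is correct and follows essentially the same route as the paper: the ``$\Leftarrow$'' direction by specialising to $I=V$, and the ``$\Rightarrow$'' direction by restricting the convex combination $X=\sum_i \lambda_i s_i s_i^{\transposedVec}$ to the principal submatrix indexed by $I$ and noting that each restricted vector is a stable set vector of the induced subgraph. You merely spell out the details that the paper summarises as ``one can easily extract a convex combination.''
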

\begin{proof}
    As $\XI \in \STAB^{2}(\GI)$ for all $I \subseteq V$ implies 
    $X \in \STAB^{2}(G)$ for $I=V$, one direction of the 
    equivalence is trivial.
    For the other direction note that $X \in \STAB^{2}(G)$ implies 
    that $X$ is a convex combination of $ss^T$ for stable set vectors $s \in 
    \mathcal{S}(G)$. 
    From this one can easily extract a convex combination of 
    $ss^T$ for $s \in \mathcal{S}(\GI)$ for $\XI$, thus 
    $\XI \in \STAB^{2}(\GI)$ for all $I \subseteq V$.
\end{proof}

Observation~\ref{lem:XInSTABGImpliesXIInSTABGI} implies that adding the 
constraint $X \in \STAB^{2}(G)$ to~\eqref{lovaszTheta_nPlus1} as 
in 
Lemma~\ref{lem:SdpWithESCOfSizeNIsAlpha} makes sure that the constraint $\XI 
\in 
\STAB^{2}(\GI)$ is fulfilled for all subgraphs $\GI$ of $G$. 
This gives rise to the following definition.

\begin{definition}
    \label{def:ESC}
    Let $G = (V,E)$ be a graph and let $I \subseteq V$.
    Then the exact subgraph constraint (ESC) for $\GI$ is defined as
    %\begin{equation*}
    $
    \XI \in \STAB^{2}(\GI).
    $
    %\end{equation*}
\end{definition}

Finally we consider the hierarchy by 
Adams, Anjos, Rendl and Wiegele~\cite{AARW}.
\begin{definition}
    Let $G = (V,E)$ be a graph with $|V| = n$ and
    let $J$ be a set of subsets of $V$. Then $\zESCJ$ is the optimal objective 
    function value
    of~\eqref{lovaszTheta_nPlus1} with the ESC for 
    every 
    subgraph induced by a set in $J$, so
    \begin{equation}
    \label{SDPRelaxationWithESCHierarchyJ}
    \zESCJ = 
    \max \left\{
    \allones{n}^{\transposedVec}x:
    (x,X) \in \ThetaBody^2(G),~
    \XI \in \STAB^{2}(\GI) \quad \forall I \in J
    \right\}. 
    \end{equation}
    
    Furthermore, for $k \in \Nzero$ with $k \leqslant n$
    let $\Jk{k} = \{I \subseteq V: |I| = k\}$. 
    Then the $k$-th level of the exact subgraph hierarchy (ESH) is defined as
    $\zESCHG{k} = \zESCJk$.
\end{definition}

In other words the $k$-th level of the ESH is the 
SDP for calculating the Lov\'{a}sz theta function~\eqref{lovaszTheta_nPlus1} 
with additional ESCs for every 
subgraph of order $k$. Due to Lemma~\ref{lem:XInSTABGImpliesXIInSTABGI} every 
level of the ESH is a relaxation of \eqref{eq:SdpWithESCOfSizeNIsAlpha}.

Note that Adams, Anjos, Rendl and Wiegele did not give the hierarchy a name. 
However, they  called 
the ESCs for all subgraphs of order $k$ and therefore the 
constraint to add at the $k$-th level of the ESH the $k$-projection 
constraint.

Let us briefly look at some properties of $\zESCHG{k}$.
For example, the next lemma shows that 
	the bound obtained from the ESH is better the higher the level of the 
ESH is.

\begin{lemma}
    \label{lem:PropertiesESH}
    Let $G = (V,E)$ be a graph with $|V| = n$. 
    Then
\begin{equation*}
\vartheta(G) = \zESCHG{0} =  \zESCHG{1} \geqslant \zESCHG{k-1} \geqslant 
\zESCHG{k} \geqslant \zESCHG{n} = \alpha(G)
\end{equation*}
    holds for all $k \in \{1, \dots, n\}$.
\end{lemma}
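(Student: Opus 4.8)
The plan is to read off the chain from a single general principle plus three special cases. The general principle is that, for any collection $J$ of vertex subsets, $\zESCJ$ is the maximum of the linear objective $\allones{n}^{\transposedVec}x$ over a subset of $\ThetaBody^{2}(G)$; consequently (i) $\zESCJ \leqslant \vartheta(G)$ always, and (ii) if every $(x,X) \in \ThetaBody^{2}(G)$ satisfying all ESCs of $J'$ also satisfies all ESCs of $J$, then the feasible region of the program defining $\zESC{J'}$ is contained in that of the program defining $\zESCJ$, whence $\zESC{J'} \leqslant \zESCJ$. The two bookends of the chain are then immediate: for $k=0$ we have $\Jk{0}=\{\emptyset\}$ and the ESC for $G_{\emptyset}$ constrains the empty submatrix, so it is vacuous and $\zESCHG{0}=\vartheta(G)$; for $k=n$ we have $\Jk{n}=\{V\}$, and since $X_{V}=X$ and $G_{V}=G$ the level-$n$ program is exactly the one in \autoref{lem:SdpWithESCOfSizeNIsAlpha}, so $\zESCHG{n}=\alpha(G)$.

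Next I would treat the (only mildly subtle) equality $\zESCHG{1}=\vartheta(G)$. Here $\Jk{1}=\{\{v\}:v\in V\}$, and the induced subgraph $G_{\{v\}}$ has one vertex and no edge, so $\mathcal{S}(G_{\{v\}})=\{(0),(1)\}$, $\STAB^{2}(G_{\{v\}})=[0,1]$, and the ESC for $\{v\}$ is simply $0\leqslant X_{v,v}\leqslant 1$. I would observe that this holds automatically on $\ThetaBody^{2}(G)$: by \autoref{lem:X-xxPsdIffBigMatrixPsd} the matrix $X-xx^{\transposedVec}$ is positive semidefinite, so its $v$-th diagonal entry $X_{v,v}-x_{v}^{2}=x_{v}-x_{v}^{2}$ (using $\diag(X)=x$) is nonnegative, which is equivalent to $0\leqslant x_{v}=X_{v,v}\leqslant 1$. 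Hence the level-$1$ ESCs add nothing, giving $\zESCHG{1}=\vartheta(G)=\zESCHG{0}$.

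For the monotonicity I would prove $\zESCHG{j-1}\geqslant\zESCHG{j}$ for every $j\in\{1,\dots,n\}$ via principle (ii). Fix $I'\subseteq V$ with $|I'|=j-1$; since $j\leqslant n$ choose $I$ with $I'\subseteq I\subseteq V$ and $|I|=j$. Any $(x,X)$ satisfying all level-$j$ ESCs satisfies $X_{I}\in\STAB^{2}(G_{I})$, and \autoref{lem:XInSTABGImpliesXIInSTABGI} applied to the graph $G_{I}$ and the subset $I'\subseteq I$ gives $(X_{I})_{I'}\in\STAB^{2}((G_{I})_{I'})$. Since $(X_{I})_{I'}=X_{I'}$ and the subgraph of $G_{I}$ induced by $I'$ equals $G_{I'}$, this is precisely the ESC for $I'$; as $I'$ was arbitrary, every $(x,X)$ feasible at level $j$ is feasible at level $j-1$, and the claimed inequality follows. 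Combining the two equalities of the previous paragraphs, the bookend $\zESCHG{n}=\alpha(G)$, and these consecutive inequalities, the displayed chain for any fixed $k\in\{1,\dots,n\}$ is just a sub-chain of $\vartheta(G)=\zESCHG{0}=\zESCHG{1}\geqslant\zESCHG{2}\geqslant\cdots\geqslant\zESCHG{n}=\alpha(G)$. I expect no real obstacle: the one point requiring care is the $k=1$ redundancy argument above — noticing that the singleton ESCs are already enforced by positive semidefiniteness in~\eqref{lovaszTheta_nPlus1} — while the rest is bookkeeping with \autoref{lem:XInSTABGImpliesXIInSTABGI} and \autoref{lem:SdpWithESCOfSizeNIsAlpha}.
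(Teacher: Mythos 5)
Your proposal is correct and follows essentially the same route as the paper: \autoref{lem:SdpWithESCOfSizeNIsAlpha} for the level-$n$ endpoint, the observation that singleton ESCs reduce to $X_{v,v}\in[0,1]$ and are already implied by the semidefiniteness constraint, and \autoref{lem:XInSTABGImpliesXIInSTABGI} for the monotonicity between consecutive levels. Your write-up is in fact slightly more careful than the paper's on the $k=1$ step (deriving $X_{v,v}\leqslant 1$ from $X-xx^{\transposedVec}\succcurlyeq 0$ together with $\diag(X)=x$, rather than from $X\succcurlyeq 0$ alone) and on spelling out that \autoref{lem:XInSTABGImpliesXIInSTABGI} is applied to the induced subgraph $\GI$, but these are refinements of the same argument, not a different one.
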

\begin{proof}
 Lemma~\ref{lem:SdpWithESCOfSizeNIsAlpha} states that $\zESCHG{n} = 
 \alpha(G)$.
For $k = 0$ we do not add any additional constraint 
into~\eqref{lovaszTheta_nPlus1}. 
For $k=1$ the ESC for $I = 
\{i\}$ boils down to $X_{i,i} \in [0,1]$, which is enforced by 
$X \succcurlyeq 0$.
Therefore, $\vartheta(G) = \zESCHG{0} = \zESCHG{1}$ holds.
Additionally, due to Lemma~\ref{lem:XInSTABGImpliesXIInSTABGI} whenever 
all 
subgraphs of order $k$ 
are exact, also all subgraphs of order $k-1$ are exact, 
which yields the desired result. 
\end{proof}

Next, we consider an example in order to get a feeling for the ESH and 
how 
good the bounds on $\alpha(G)$ obtained with it are.

\begin{example}
    \label{Ex:ESHforThreeGraphs}
    We consider $\zESCHG{k}$ for $k \leqslant 8$ for a Paley graph, a Hamming 
    graph~\cite{DIMACS1992} and a 
    random 
    graph $G_{60,0.25}$ from the Erd\H{o}s-R\'enyi 
    model 
    in Table~\ref{tab:CalculateESHForThreeGraphs}.
    It is possible to compute 
    $\zESCHG{2}$. For $k \geqslant 3$ we use  
    relaxations (i.e.\ we compute $\zESCJ$ by including the ESCs only for a 
    subset $J$ of the 
    set of all subgraphs of order $k$ 
    and determine the sets $J$ as it is described in more detail in 
    Section~\ref{sec:computations}) 
    to get an upper bound on $\zESCHG{k}$ or deduce the value.

    For hamming6\_4 
    already for $k=2$ the upper 
    bound $\zESCHG{k}$ matches $\alpha(G)$. Thus,  
    $\zESCHG{k} = 4$ holds for all $k \geqslant 2$, so $\zESCHG{k}$ 
    is an excellent bound on $\alpha(G)$ for this graph.
    For $G_{60,0.25}$ as $k$ increases $\zESCHG{k}$ improves 
    little by little. 
    For $k = 4$ the floor value of $\zESCHG{k}$ decreases, which is 
    very important in a branch-and-bound framework, where this 
    potentially reduces the size of the branch-and-bound tree 
    drastically.
    For the Paley graph on 61 vertices only for $k \geqslant 6$ the value of 
    $\zESCHG{k}$ improves towards $\alpha(G)$. This example 
    represents one of the worst cases, where including ESCs for subgraphs of 
    small order does not give an improvement of the upper bound.

    \begin{table}[hbtp]
        \caption{The value of $\zESCHG{k}$ for three graphs. Values 
            in gray cells are only upper bounds on $\zESCHG{k}$}
        \label{tab:CalculateESHForThreeGraphs}
        \begin{center}
            \begin{footnotesize}
                \setlength\tabcolsep{3pt} %default: 6pt
                \renewcommand{\arraystretch}{1.25}
                \begin{tabular}{|c|r|r|r|r|r|r|r|r|r|r|}
                    \hline
                    $G$ & $n$ & $\alpha(G)$ & $\vartheta(G)$ & $\zESCHG{2}$ & 
                    $\zESCHG{3}$ &$\zESCHG{4}$ & $\zESCHG{5}$ &$\zESCHG{6}$ & 
                    $\zESCHG{7}$ & $\zESCHG{8}$\\ \hline 
                    hamming6\_4  &  64 &  4 & 5.333 & 4.000 & 
                    4.000 & 4.000 & 
                    4.000 & 4.000 & 
                    4.000 & 4.000 \\   
                    $G_{60,0.25}$ &  60 & 13 
                    & 14.282 
                    & 14.201 
                    & 14.156\cellcolor{gray!25}
                    & 13.945\cellcolor{gray!25}
                    & 13.741\cellcolor{gray!25}
                    & 13.386\cellcolor{gray!25}
                    & 13.209\cellcolor{gray!25} 
                    & 13.112\cellcolor{gray!25}\\ 
                    Paley61 &  61 &  5 
                    & 7.810 
                    & 7.810
                    & 7.810
                    & 7.810
                    & 7.810
                    & 7.078\cellcolor{gray!25}
                    & 6.989\cellcolor{gray!25}
                    & 6.990\cellcolor{gray!25}\\ \hline 
                \end{tabular}
            \end{footnotesize}
        \end{center}
        \renewcommand{\arraystretch}{1}%1 is default
    \end{table}
\end{example}

Example~\ref{Ex:ESHforThreeGraphs} shows that there are 
graphs where including ESCs for subgraphs of small order improves the bound 
very much, little by little and not at all. 
It is not surprising that the ESH does not give outstanding bounds 
for all instances, as the stable set problem is NP-hard.

\subsection{Representation of Exact Subgraph Constraints}
\label{sec:ESCAsInequalities}

Next, we briefly discuss the implementation of ESCs.
In Definition~\ref{def:STAB2}
we introduced $\STAB^{2}(G)$ as convex hull, 
so the most natural way to formulate the ESC is as a convex 
combination as in the proof 
of Lemma~\ref{lem:SdpWithESCOfSizeNIsAlpha}.
We start with the following definition.
\begin{definition}
    Let $G$ be a graph and let $\GI$ be the subgraph induced by $I 
    \subseteq V$.
    Furthermore, let $|\mathcal{S}(\GI)| = \tI$ and let $\mathcal{S}(\GI) 
    = 
    \left\{\sIi{1}, \dots, \sIi{\tI}\right\}$.
    Then the $i$-th stable set matrix $\SIi$ of $\GI$ is defined 
    as $\SIi = \sIi{i}(\sIi{i})^{\transposedVec}$.
\end{definition}

Now the ESC $\XI \in \STAB^{2}(\GI)$ can be rewritten as
\begin{equation*}
\XI \in  \conv \left\{ \SIi: 
1 \leqslant i \leqslant \tI \right\}
\end{equation*}
and it is natural to implement the ESC for subgraph $\GI$ as
\begin{equation*}
\XI = \sum_{i=1}^{\tI} 
\lambdaIi{i}\SIi, \quad \lambdaI \in \simplex_{\tI}.
\end{equation*}

This implies that for the implementation of the ESC for $\GI$ we 
include $\tI$ additional 
non-negative variables, one additional equality constraint for 
$\lambdaI$ and a matrix equality constraint of size $\kI \times \kI$ that 
couples $\XI$ 
and 
$\lambdaI$ into~\eqref{lovaszTheta_nPlus1}.

There is also a different possibility to represent ESCs that uses the following 
fact.
The polytope $\STAB^{2}(\GI)$ is given by its extreme points, which 
are the stable set matrices of $\GI$. Due to the Minkowski-Weyl's theorem it 
can also be represented by its facets, i.e.\ by (finitely many) inequalities.
A priory different subgraphs induce different stable set matrices and hence 
also different squared stable set polytopes. The next result allows us to 
consider the squared stable set polytope of only one graph for a given order.
\begin{lemma}
    \label{lem:XIInSTAB2GIIffXIInSTAB2EmptyGraph}
    Let $G = (V,E)$ be a graph with $|V| = n$.
    Let $\GZeroi{n} = (\VZeroi{n},\EZero)$ with $\VZeroi{n} = \{1, 
    \dots, n\}$ and $\EZero = \emptyset$.
    Let $X \in \Sym{n}$.
    If $X_{i,j} = 0 $ for all $\{i,j\} \in E$, then
    \begin{equation*}
    X \in \STAB^{2}(G) 
    \quad \Leftrightarrow \quad 
    X \in \STAB^{2}(\GZeroi{n}).
    \end{equation*}
\end{lemma}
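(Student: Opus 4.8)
The plan is to prove the two implications separately, and the work is almost entirely on one side. The forward implication $X\in\STAB^{2}(G)\Rightarrow X\in\STAB^{2}(\GZeroi{n})$ requires no hypothesis at all: since $\EZero=\emptyset$, the defining condition for membership in $\mathcal{S}(\GZeroi{n})$ is vacuous, so $\mathcal{S}(\GZeroi{n})=\{0,1\}^{n}\supseteq\mathcal{S}(G)$. Taking convex hulls of the corresponding sets of rank-one matrices gives $\STAB^{2}(G)=\conv\{ss^{\transposedVec}:s\in\mathcal{S}(G)\}\subseteq\conv\{ss^{\transposedVec}:s\in\mathcal{S}(\GZeroi{n})\}=\STAB^{2}(\GZeroi{n})$, which settles this direction.

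For the reverse implication I would start from a representation $X=\sum_{k=1}^{t}\lambdaI_{k}s_{k}s_{k}^{\transposedVec}$ with $\lambda\in\simplex_{t}$ and $s_{k}\in\{0,1\}^{n}$ (possible because $X\in\STAB^{2}(\GZeroi{n})$ and the stable set matrices of $\GZeroi{n}$ are exactly the matrices $ss^{\transposedVec}$ with $s\in\{0,1\}^{n}$), discarding any index with $\lambdaI_{k}=0$ so that $\lambdaI_{k}>0$ for all remaining $k$. The key step is to exploit the hypothesis $X_{i,j}=0$ for $\{i,j\}\in E$ together with non-negativity of all quantities involved: for such an edge, $0=X_{i,j}=\sum_{k}\lambdaI_{k}(s_{k})_{i}(s_{k})_{j}$ is a sum of non-negative terms, so each term vanishes; since $\lambdaI_{k}>0$ this forces $(s_{k})_{i}(s_{k})_{j}=0$. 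As this holds for every edge of $G$, each $s_{k}$ lies in $\mathcal{S}(G)$, hence $X$ is a convex combination of stable set matrices of $G$ and $X\in\STAB^{2}(G)$.

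I do not expect a genuine obstacle here; the only point that needs a word of care is the non-negativity argument in the reverse direction, which is the same ``extraction'' idea already used in the proof of \autoref{lem:XInSTABGImpliesXIInSTABGI}. One could in fact obtain the statement by invoking that observation after identifying $G$ with an induced subgraph of a larger edgeless graph, but spelling out the two-line direct argument above is cleaner and self-contained.
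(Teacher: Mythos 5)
Your proposal is correct and follows essentially the same route as the paper's own proof: the forward direction via the inclusion of stable set matrices of $G$ among those of $\GZeroi{n}$, and the reverse direction by observing that $X_{i,j}=0$ for an edge $\{i,j\}$ forces every positively weighted stable set matrix in the convex combination to have a zero $(i,j)$ entry. The only cosmetic difference is that you phrase this as "positive coefficient implies vanishing entry" while the paper states the contrapositive "nonzero entry implies zero coefficient".
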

\begin{proof}
    If $X \in \STAB^{2}(G)$, 
    then by definition $X$ 
    is a convex combination of stable set matrices of $G$. Then it 
    is also a convex combination of stable set matrices of $\GZeroi{n}$, 
    which are all possible stable set matrices of order $n$. Hence, $X 
    \in \STAB^{2}(\GZeroi{n})$.
    
    If $X \in \STAB^{2}(\GZeroi{n})$, then $X$ is a convex 
    combination of 
    all possible stable set matrices of order $n$.
    Consider an edge $\{i,j\} \in E$, then by assumption 
    $X_{i,j} = 0$.
    Since all entries of stable set matrices are $0$ or $1$, this implies 
    that whenever the entry $(i,j)$ of a stable set matrix in the 
    convex combination is not equal to zero, 
    its coefficient is zero.
    Therefore, in the convex combination only stable set matrices which 
    are 
    also stable set matrices of $G$ have non-zero coefficients and thus 
    $X \in \STAB^{2}(G)$.
\end{proof}

As a consequence of Lemma~\ref{lem:XIInSTAB2GIIffXIInSTAB2EmptyGraph}
we can replace the ESC
$\XI \in \STAB^{2}(\GI)$ by the constraint 
$\XI \in \STAB^{2}(\GZeroi{\kI})$ whenever we add the ESC 
to~\eqref{lovaszTheta_nPlus1}. 
Thus, it is enough to have a facet representation of $\STAB^{2}(\GZeroi{\kI})$  
in order to include the ESC for $\GI$ represented by inequalities 
into~\eqref{lovaszTheta_nPlus1}.

In order to obtain all facets of $\STAB^{2}(\GZeroi{k})$ for a given $k$ we can 
use the fact that a projection of $\STAB^{2}(\GZeroi{k})$ is the 
boolean quadric polytope of size $k$ as already explained in 
Section~\ref{sec:DefESH}.  
Deza and Laurent~\cite{DezaLaurentBQP} called the boolean quadric polytope of 
size $k$ the correlation polytope of size $k$. They showed that the correlation 
polytope of size $k$ is in one-to-one correspondence with the cut polytope of 
size $k+1$ via the so-called covariance map.
Moreover, they presented a complete list of the facets of the cut 
polytopes up 
to a size of $k+1 = 7$, gave several references of other lists of facets and 
furthermore linked to a web page.
The recent version of this web page is maintained by 
Christof~\cite{HPCutFacets} and a conjectured complete facet description of the 
cut polytope of size $k+1 = 8$ and a possibly complete description of the cut 
polytope of size $k+1 = 9$ can be found there.
Therefore, we could take this list and go back via the covariance map to 
transfer it into a complete list of facets of $\STAB^{2}(\GZeroi{k})$.

However, we take a more direct path and use the software 
PORTA~\cite{PORTA} in order to obtain all inequalities that 
represent facets of $\STAB^{2}(\GZeroi{k})$ from its extreme points  for a 
given $k$.
The number of facets for all $k \leqslant 6$ is presented in 
Table~\ref{tab:nrOfFacets}.

\begin{table}[hbtp]
    \caption{The number of facets of $\STAB^{2}(\GZeroi{k})$ for $k \in 
            \{2,3,4,5,6\}$}
    \begin{center}
        \renewcommand{\arraystretch}{1.25} % 1 is default
        \begin{tabular}{|l|r|r|r|r|r|}
            \hline
            $k$ & \multicolumn{1}{c|}{$2$} & \multicolumn{1}{c|}{$3$} & 
            \multicolumn{1}{c|}{$4$} & \multicolumn{1}{c|}{$5$} & 
            \multicolumn{1}{c|}{$6$}\\
            \hline
            $\#$ facets of $\STAB^{2}(\GZeroi{k})$ & 4& 16& 56& 368 & 116764\\
            \hline
        \end{tabular}
    \end{center}
    \renewcommand{\arraystretch}{1} % 1 is default
    \label{tab:nrOfFacets}
\end{table}

Now we briefly present the inequalities that 
represent facets of $\STAB^{2}(\GZeroi{k})$ for $k \in \{2,3\}$. 
The ESC for a subgraph $\GI$  of order $\kI = 2$ 
with $I=\{i,j\}$ is equivalent to
\begin{subequations}
    \label{ineq:ESC2}
    \begin{align}
    0 &\leqslant X_{i,j}\label{ineq:facetGreater0}\\
    X_{i,j} & \leqslant  X_{i,i} \label{ineq:facetDiagGreaterOffDiag}\\
    X_{i,j} & \leqslant X_{j,j} \label{ineq:facetDiagGreaterOffDiag2}\\
    X_{i,i} + X_{j,j} & \leqslant 1 + X_{i,j}. 
    \label{ineq:facetSumDiagSmaller1PlusOffDiag}
    \end{align}
\end{subequations}

For a subgraph $\GI$ 
of order $\kI= 3$ with $I=\{i,j,k\}$ the ESCs is 
equivalent to~\eqref{ineq:ESC2} for all three sets $\{i,j\}$, 
$\{i,\ell\}$ and 
$\{j,\ell\}$ and the following inequalities 
\begin{subequations}
    \begin{align}
    X_{i,j} + X_{i,\ell}  
    &\leqslant X_{i,i} +  X_{j,\ell} 
    \label{inequ:homog3_1}\\
    X_{i,j} + X_{j,\ell}  
    &\leqslant X_{j,j} +  X_{i,\ell} 
    \label{inequ:homog3_2}\\
    X_{i,\ell} + X_{j,\ell}  
    &\leqslant X_{\ell,\ell} +  X_{i,j} 
    \label{inequ:homog3_3}\\
    X_{i,i} + X_{j,j} + X_{\ell,\ell} 
    & \leqslant 1 + X_{i,j} + X_{i,\ell} + X_{j,\ell}, 
    \label{inequ:inhomog3}
    \end{align}
\end{subequations}
so $3\cdot4 + 4 = 16$ inequalities, which matches Table~\ref{tab:nrOfFacets}.
We come back to these inequalities in 
Section~\ref{sec:ESCcompareToOtherHierarchies} and 
Section~\ref{sec:CompareESHandCESH}.

To summarize, we have discussed two different options to represent ESCs, 
one as convex combination and one as inequalities that represent facets.

\subsection{Comparison to Other Hierarchies}
\label{sec:ESCcompareToOtherHierarchies}
In this section we compare the ESH for the 
stable set problem to other hierarchies, as it 
has never been done before. 

The most prominent hierarchies of relaxations for general $0$--$1$ programming 
problems are the hierarchies by Sherali and Adams~\cite{SheraliAdamsHierarchy}, 
by Lov\'asz and Schrijver~\cite{LovaszSchrijverHierarchy} and by 
Lasserre~\cite{LasserreHierarchy}. We refer to 
Laurent~\cite{LaurentComparisonHierarchies} for rigorous definitions, 
comparisons and for details of applying them 
to the stable set problem. 
%Here we only summarize the key features.

%The Sherali--Adams hierarchy is a linear and the Lasserre hierarchy is an SDP 
%hierarchy. Both can be formulated within the framework of moment matrices, 
%which makes it easy to compare them. The Lov\'asz--Schrijver hierarchy based 
%%%on 
%the so-called $N$ and $N_{+}$ operator is an LP and SDP hierarchy 
%%%respectively. 
%Here we focus on the SDP version.

In fact the Lasserre hierarchy is a 
refinement of the Sherali--Adams hierarchy which is a refinement of the SDP 
based 
Lov\'asz--Schrijver hierarchy.
All three hierarchies are exact at level $\alpha(G)$, so after at 
most $\alpha(G)$ steps $\STAB(G)$ is obtained.

Silvestri~\cite{SilvestriThesis} observed that $\zESCHG{2}$ is at least as good 
as the upper bound obtained at the first level of the SDP hierarchy of 
Lov\'asz--Schrijver. This is easy to see, because this SDP 
is~\eqref{lovaszTheta_nPlus1} with 
non-negativity constraints for $X$, and every 
$\XI \in \STAB(\GI)$ is entry-wise non-negative due 
to~\eqref{ineq:facetGreater0}.
Furthermore, Silvestri proved that the bound on the $k$-th level of the 
Lasserre hierarchy is at least as good as $\zESCHG{k}$, so the Lasserre 
hierarchy yields stronger relaxations than the ESH.

A drawback of all the above hierarchies is that the size of the SDPs 
to solve grows at each level. In particular, the SDP 
at the $k$-th level of the Lasserre hierarchy has a matrix variable with  
one 
row for each subset of $i$ vertices of the $n$ vertices for every $1 \leqslant 
i \leqslant k$. 
Therefore, the matrix variable is of order $\sum_{i = 
0}^{k}\binom{n}{i}$. For the ESH this order  
remains $n+1$ on 
each level and only 
the number of constraints increases.  

Another big advantage of the ESH over the Lasserre 
hierarchy is that it is possible to include partial information of the $k$-th 
level of the hierarchy, which was exploited by  Gaar and 
Rendl~\cite{elli-diss, GaarRendl, GaarRendlFull}. 
In the case of the Lasserre 
hierarchy one needs the whole huge matrix in order to incorporate the 
information. Due to that Gvozdenovi\'c, Laurent and 
Vallentin~\cite{GvozdenovicLaurentVallentinHierarchy} introduced a new 
hierarchy where they only consider suitable principal submatrices of the huge  
matrix.

Eventually we want to compare the ESH with other 
relaxations of $\vartheta(G)$ towards $\alpha(G)$.
Lov\'asz and Schrijver~\cite{LovaszSchrijverHierarchy} proposed to add 
inequalities that boil down to \eqref{ineq:facetGreater0}, and  inequalities of 
the form \eqref{inequ:homog3_3} and \eqref{inequ:inhomog3} whenever $\{i,j\} 
\in E$. Hence, $\zESCHG{k}$ is as least as good as this bound for all $k 
\geqslant 3$.
Furthermore, Gruber and Rendl~\cite{GruberRendl} proposed to add 
inequalities of 
the form \eqref{inequ:homog3_3} and \eqref{inequ:inhomog3} also if $\{i,j\} 
\not \in E$, hence the $k$-th level of the ESH is as least as strong as 
this relaxation for every $k\geqslant 3$.

Note that Fischer, Gruber, Rendl and 
Sotirov~\cite{FischerGruberRendlSotirovBundleMaxCutEquipartition} add triangle 
inequalities into an SDP relaxation of Max-Cut. Therefore, applying the ESH
 to the Max-Cut relaxation as it is done by in~\cite{GaarRendlFull} 
can be viewed as generalization of the approach 
in~\cite{FischerGruberRendlSotirovBundleMaxCutEquipartition}.

For a discussion of other approaches for improving a relaxation by including 
information of smaller polytopes into the relaxation see~\cite{AARW}.

\section{The Compressed Exact Subgraph Hierarchy}
\label{esc:compressedESH}

In this section we newly introduce a variant of the ESH, namely the 
	compressed ESH, which at first sight is computational favorable to the ESH, 
	as it starts from a smaller SDP formulation of the Lov\'{a}sz theta 
	function. Additionally, we compare this new hierarchy to the ESH and to 
	other hierarchies from the literature.

\subsection{Two SDP Formulations of the Lov\'{a}sz Theta Function}
\label{sec:Compare2ThetaFunction}

The starting point of the new compressed ESH is an SDP formulation of the 
Lov\'{a}sz theta function~$\vartheta(G)$ by 
Lov\'{a}sz~\cite{LovaszStart}, namely
\begin{alignat}{3}
\label{lovaszTheta_n}
\tag{$T_n$}
\vartheta(G) =
& \max \quad & \langle \allones{n\times n}, X  \rangle \\ \nonumber
& \st &  \trace(X) &= 1\\ \nonumber
&& X_{i,j} &= 0 && \forall \{i,j\} \in E \\ \nonumber                  
&& X &\succcurlyeq 0\\ \nonumber
&& X &\in \Sym{n}.
\end{alignat}
As the feasible region of~\eqref{lovaszTheta_n} will be used later, we define
\begin{equation*}
\CompressedThetaBody^{2}(G) = \left\{ X \in \Sym{n} : 
\trace(X) = 1, ~  X_{i,j}=0 \quad \forall \{i,j\} \in E,~
X 
\succcurlyeq 0 
\right\}.
\end{equation*}

Before we continue, we compare the two SDP 
formulations~\eqref{lovaszTheta_nPlus1} and~\eqref{lovaszTheta_n} 
of~$\vartheta(G)$.
As already 
mentioned~\eqref{lovaszTheta_nPlus1} is an SDP with a matrix variable of 
order 
$n+1$ and $n+m+1$ equality constraints.
The formulation~\eqref{lovaszTheta_n} has a matrix variable of order $n$ 
and $m+1$ constraints, so both the number of variables and 
constraints is smaller.
Hence, in computations~\eqref{lovaszTheta_n} seems favorable.

So far, there has been a lot of work on comparing~\eqref{lovaszTheta_nPlus1} 
and~\eqref{lovaszTheta_n}. 
Gruber and Rendl~\cite{GruberRendl} showed the following. If $(x^\ast,X^\ast)$ 
is a 
feasible solution of~\eqref{lovaszTheta_nPlus1}, then 
$X' = \frac{1}{\trace(X^\ast)}X^\ast$ 
is a feasible solution of~\eqref{lovaszTheta_n} which has at least the same 
objective function value. 
Hence, an optimal solution
of~\eqref{lovaszTheta_nPlus1} can be 
transformed into an optimal solution of~\eqref{lovaszTheta_n}.
They also proved that whenever $X'$ is optimal 
for~\eqref{lovaszTheta_n}, then $X^\ast = \langle \allones{n\times n}, X'  
\rangle X'$ is optimal for~\eqref{lovaszTheta_nPlus1}.
Furthermore, Yildirim and Fan-Orzechowski\cite{YildirimFan} gave a  
transformation from a feasible solution $X'$ of~\eqref{lovaszTheta_n} to obtain 
$x^\ast$ of a 
feasible solution $(x^\ast,X^\ast)$ of~\eqref{lovaszTheta_nPlus1} with at least 
the 
same objective function value. 
Galli and 
Letchford~\cite{GalliLetchford} showed how to construct a corresponding 
$X^\ast$. 
For an optimal $X'$ the obtained optimal $(x^\ast,X^\ast)$ coincides with the 
one of 
Gruber and Rendl.
Further details can be found in~\cite{GalliLetchford}, where also the influence 
of adding certain cutting planes into~\eqref{lovaszTheta_nPlus1} 
and~\eqref{lovaszTheta_n} is discussed. 
We come back to that later 
in~Section~\ref{sec:CompareESHandCESH}.

\subsection{Introduction of the Compressed Exact Subgraph Hierarchy}
\label{sec:CESH}

Next, we newly introduce the compressed exact subgraph 
hierarchy, a 
hierarchy similar to the ESH, but it 
starts from~\eqref{lovaszTheta_n} instead of starting 
from~\eqref{lovaszTheta_nPlus1}. 
First, we verify that it makes sense to build such 
a hierarchy.

\begin{lemma}
    \label{lem:SdpWithESCOfSizeNPlus1IsAlphan}
    If we add the constraint $X \in \STAB^{2}(G)$ into~\eqref{lovaszTheta_n} 
    for a graph $G$, then 
    the optimal objective function value is $\alpha(G)$, so
    \begin{equation}
    \label{eq:SdpWithESCOfSizeNIsAlphan}
    \alpha(G) =
    \max \left\{
    \langle \allones{ n\times n},  X  \rangle :
    X \in \CompressedThetaBody^{2}(G),~
    X \in \STAB^{2}(G)
    \right\}.
    \end{equation}
\end{lemma}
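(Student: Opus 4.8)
The plan is to follow the structure of the proof of \autoref{lem:SdpWithESCOfSizeNIsAlpha}, adapting every step to the smaller formulation~\eqref{lovaszTheta_n}. First I would note that the constraint $X \in \STAB^{2}(G)$ is already quite strong on its own: any $X$ in $\STAB^{2}(G)$ is symmetric, satisfies $X_{i,j}=0$ for all $\{i,j\}\in E$, and is positive semidefinite as a convex combination of the matrices $ss^{\transposedVec}$ with $s\in\mathcal{S}(G)$. Hence, of the defining conditions of $\CompressedThetaBody^{2}(G)$, only $\trace(X)=1$ is not automatically implied, and the feasible region on the right-hand side of~\eqref{eq:SdpWithESCOfSizeNIsAlphan} is exactly $\left\{X\in\STAB^{2}(G):\trace(X)=1\right\}$.

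Next, writing $\mathcal{S}(G)=\{s_{1},\dots,s_{t}\}$ and letting $S_{i}$ denote the stable set with incidence vector $s_{i}$, I would parametrize a feasible $X$ as $X=\sum_{i=1}^{t}\lambda_{i}s_{i}s_{i}^{\transposedVec}$ with $\lambda\in\simplex_{t}$. Since $\trace(s_{i}s_{i}^{\transposedVec})$ equals the number of $1$-entries of $s_{i}$, i.e.\ $|S_{i}|$, and $\langle\allones{n\times n},s_{i}s_{i}^{\transposedVec}\rangle=(\allones{n}^{\transposedVec}s_{i})^{2}=|S_{i}|^{2}$, the SDP reduces to the linear program
\begin{equation*}
\max\left\{\sum_{i=1}^{t}\lambda_{i}|S_{i}|^{2}:\lambda\in\simplex_{t},~\sum_{i=1}^{t}\lambda_{i}|S_{i}|=1\right\}.
\end{equation*}
From $|S_{i}|\leqslant\alpha(G)$ for every $i$ we obtain $\sum_{i}\lambda_{i}|S_{i}|^{2}\leqslant\alpha(G)\sum_{i}\lambda_{i}|S_{i}|=\alpha(G)$, which proves that the optimal value is at most $\alpha(G)$.

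For the matching lower bound I would exhibit an explicit feasible point. Let $S^{\ast}$ be a maximum stable set with incidence vector $s^{\ast}$, and set $X=\tfrac{1}{\alpha(G)}\,s^{\ast}(s^{\ast})^{\transposedVec}$. Since the empty stable set contributes the stable set matrix $\allzeros{n}$, one can write $X=\tfrac{1}{\alpha(G)}s^{\ast}(s^{\ast})^{\transposedVec}+\bigl(1-\tfrac{1}{\alpha(G)}\bigr)\allzeros{n}$ as a convex combination of stable set matrices, so $X\in\STAB^{2}(G)$; moreover $\trace(X)=|S^{\ast}|/\alpha(G)=1$ and $\langle\allones{n\times n},X\rangle=(\allones{n}^{\transposedVec}s^{\ast})^{2}/\alpha(G)=\alpha(G)$, where we use $\alpha(G)\geqslant 1$, which holds as soon as $G$ has a vertex. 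I do not expect a genuine obstacle; the point that needs care — and the real difference from \autoref{lem:SdpWithESCOfSizeNIsAlpha} — is that the objective of~\eqref{lovaszTheta_n} is quadratic in the stable set sizes while the normalization is linear, so the bound only comes out because the constraint $\sum_{i}\lambda_{i}|S_{i}|=1$ couples exactly with $\sum_{i}\lambda_{i}|S_{i}|^{2}$ via $|S_{i}|\leqslant\alpha(G)$, and because the scaling factor $1/\alpha(G)$ (rather than simply taking $s^{\ast}(s^{\ast})^{\transposedVec}$) is needed to meet $\trace(X)=1$ while staying inside $\STAB^{2}(G)$.
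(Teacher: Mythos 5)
Your proposal is correct and follows essentially the same route as the paper: parametrize $X$ as a convex combination of stable set matrices and reduce the SDP to the linear program $\max\{\sum_i \lambda_i a_i^2 : \sum_i \lambda_i a_i = 1,\ \lambda \in \simplex_t\}$ with $a_i$ the stable set sizes. Your treatment of that LP is in fact slightly more complete than the paper's, which simply asserts the optimal solution $\mu_{\alpha}=1/\alpha$, $\mu_0=(\alpha-1)/\alpha$, whereas you prove the upper bound via $\sum_i \lambda_i a_i^2 \leqslant \alpha(G)\sum_i \lambda_i a_i$ and exhibit the same feasible point for the lower bound.
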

\begin{proof}
	Let $(P^\mathcal{C})$ be  the SDP on the 
right-hand side of~\eqref{eq:SdpWithESCOfSizeNIsAlphan}, 
let $z^\mathcal{C}$ be its optimal objective function value 
and let $\mathcal{S}(G) = \{s_{1}, \dots, s_{t}\}$. 

Let without loss of generality $s_t$ be the incidence vector of a maximum 
stable set of $G$, and $s_1$ be the incidence vector of the empty set, 
which is of course stable. Then clearly $X = 
\frac{1}{\alpha(G)}s_ts_t^{\transposedVec} + \left( 1 - 
\frac{1}{\alpha(G)}\right)s_1s_1^{\transposedVec}$ 
is feasible for $(P_{\alpha}^\mathcal{C})$ and has objective function value 
$\alpha(G)$, 
so $\alpha(G) \leqslant z^\mathcal{C}$ holds.

	Furthermore, any feasible solution $X$ of $(P^{\mathcal{C}})$ can be 
written as 
\begin{equation*}
X = \sum_{i=1}^{t} \lambda_{i}s_{i}s_{i}^{\transposedVec}
\end{equation*}
for some $\lambda \in \simplex_{t}$ because $X \in \STAB^{2}(G)$ holds, and it 
fulfills
\begin{equation*}
1 = \trace(X) 
= \sum_{i=1}^{t} \lambda_{i}\trace(s_{i}s_{i}^{\transposedVec}) 
= \sum_{i=1}^{t} \lambda_{i}\allones{n}^{\transposedVec}s_{i}.
\end{equation*}
In consequence, the objective function value of $X$ for $(P^{\mathcal{C}})$ is 
equal to
\begin{equation*}
\langle \allones{n\times n}, X  \rangle
= \sum_{i=1}^{t} \lambda_{i} \langle \allones{n\times n}, 
s_{i}s_{i}^{\transposedVec} \rangle
= \sum_{i=1}^{t} \lambda_{i} (\allones{n}^{\transposedVec}s_{i})^2
\leqslant  
\alpha(G)\sum_{i=1}^{t} \lambda_{i} \allones{n}^{\transposedVec}s_{i}
=
\alpha(G)
\end{equation*}
and hence $z^\mathcal{C} \leqslant \alpha(G)$ holds, which finishes the proof.
\end{proof}

Lemma~\ref{lem:SdpWithESCOfSizeNPlus1IsAlphan}
corresponds to Lemma~\ref{lem:SdpWithESCOfSizeNIsAlpha} for the ESH and
justifies the introduction of the compressed exact subgraph hierarchy.
\begin{definition}
    Let $G = (V,E)$ be a graph with $|V| = n$ and
    let $J$ be a set of subsets of $V$. Then $\zCESCJ$ is the optimal objective 
    function 
    of~\eqref{lovaszTheta_n} with the ESC for every 
    subgraph induced by a set in $J$, so
    \begin{equation}
    \label{SDPRelaxationWithCESCHierarchyJ}
    \zCESCJ = 
    \max \left\{
    \langle \allones{n\times n}, X  \rangle:
    X \in \CompressedThetaBody^2(G),~
    \XI \in \STAB^{2}(\GI) \quad \forall I \in J
    \right\}.
    \end{equation}    
    
    For $k \in \Nzero$ with $k \leqslant n$
    the $k$-th level of the compressed exact subgraph hierarchy (CESH) 
    is defined as
    $\zCESCHG{k} = \zCESCJk$.
\end{definition}

As in the case of the ESH we can deduce the 
following result for the CESH.
\begin{lemma}
    \label{lem:PropertiesCESH}
    Let $G = (V,E)$ be a graph with $|V| = n$. 
    Then
    \begin{equation*}
    \vartheta(G) = \zCESCHG{0} =  \zCESCHG{1} \geqslant \zCESCHG{k-1} \geqslant 
    \zCESCHG{k} \geqslant \zCESCHG{n} = \alpha(G)
    \end{equation*}
    holds for all $k \in \{1, \dots, n\}$.
\end{lemma}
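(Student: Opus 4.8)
The plan is to copy the proof of \autoref{lem:PropertiesESH} essentially verbatim, replacing \eqref{lovaszTheta_nPlus1} by \eqref{lovaszTheta_n} and \autoref{lem:SdpWithESCOfSizeNIsAlpha} by \autoref{lem:SdpWithESCOfSizeNPlus1IsAlphan}. The identity $\zCESCHG{n} = \alpha(G)$ is immediate, because $\Jk{n} = \{V\}$ and the ESC for $\GI$ with $I = V$ is precisely the constraint $X \in \STAB^{2}(G)$ added in \autoref{lem:SdpWithESCOfSizeNPlus1IsAlphan}.

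For the two bottom levels: when $k = 0$ the only set in $\Jk{0}$ is $\emptyset$, whose ESC is vacuous, so \eqref{SDPRelaxationWithCESCHierarchyJ} with $J = \Jk{0}$ is just \eqref{lovaszTheta_n} and hence $\zCESCHG{0} = \vartheta(G)$. When $k = 1$, the ESC for $I = \{i\}$ is $\XI \in \STAB^{2}(\GI)$ for the single isolated vertex $i$, which amounts to $X_{i,i} \in [0,1]$. This is already implied by the constraints of \eqref{lovaszTheta_n}: $X \succcurlyeq 0$ gives $X_{i,i} \geqslant 0$, and since all diagonal entries of a positive semidefinite matrix are nonnegative, $\trace(X) = 1$ forces $X_{i,i} \leqslant 1$. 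Hence no level-$1$ ESC cuts anything off and $\zCESCHG{1} = \zCESCHG{0} = \vartheta(G)$.

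It then remains to establish $\zCESCHG{k-1} \geqslant \zCESCHG{k}$ for every $k \in \{1, \dots, n\}$; iterating this single-step inequality also yields $\zCESCHG{1} \geqslant \zCESCHG{k-1}$ and $\zCESCHG{k} \geqslant \zCESCHG{n}$, which completes the displayed chain. I would show that the feasible region of \eqref{SDPRelaxationWithCESCHierarchyJ} with $J = \Jk{k}$ is contained in the one with $J = \Jk{k-1}$. Take any $I' \subseteq V$ with $|I'| = k-1$; since $k \leqslant n$ we may pick $I$ with $I' \subseteq I \subseteq V$ and $|I| = k$. If $X$ satisfies the level-$k$ ESCs, then in particular $\XI \in \STAB^{2}(\GI)$, and applying \autoref{lem:XInSTABGImpliesXIInSTABGI} to the graph $\GI$ and the subset $I' \subseteq I$ shows that the submatrix of $\XI$ indexed by $I'$ --- which equals $X_{I'}$ --- lies in $\STAB^{2}(G_{I'})$. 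Thus every level-$(k-1)$ ESC holds as well, so the feasible set only shrinks when passing from $k-1$ to $k$ and the maximum cannot increase.

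There is no real obstacle here; the argument is bookkeeping. The only two points that need a moment's attention are that at level $k = 1$ the bound $X_{i,i} \leqslant 1$ now comes from the trace constraint of \eqref{lovaszTheta_n} rather than from a $2 \times 2$ principal minor of the order-$(n+1)$ matrix in \eqref{lovaszTheta_nPlus1}, and that \autoref{lem:XInSTABGImpliesXIInSTABGI} must be invoked ``locally'', i.e.\ applied to an induced subgraph $\GI$, in order to deduce order-$(k-1)$ exactness from order-$k$ exactness.
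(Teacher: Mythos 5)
Your proof is correct and takes exactly the route the paper intends: the paper's own proof of this lemma is just the one line ``Analogous to the proof of \autoref{lem:PropertiesESH}'', and your write-up is a faithful instantiation of that analogy. The two points you flag --- that $X_{i,i} \leqslant 1$ at level $1$ now comes from $\trace(X)=1$ together with $X \succcurlyeq 0$, and that \autoref{lem:XInSTABGImpliesXIInSTABGI} is applied locally to $\GI$ to pass from order $k$ to order $k-1$ --- are precisely the adjustments needed, and both are handled correctly.
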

\begin{proof}
    Analogous to the proof of Lemma~\ref{lem:PropertiesESH}.
\end{proof}

Hence, due to Lemma~\ref{lem:PropertiesESH} and 
Lemma~\ref{lem:PropertiesCESH} 
both 
the ESH and the CESH start at~$\vartheta(G)$ at level $1$ and reach $\alpha(G)$ 
on 
level $n$.

\subsection{Comparison to Other Hierarchies}
Before we continue to consider the differences between the ESH and the 
CESH, we compare the CESH with other 
relaxations of $\alpha(G)$ based on~\eqref{lovaszTheta_n}.

Schrijver~\cite{SchrijverAddNonNeg} suggested to add non-negativity constraints 
into~\eqref{lovaszTheta_n} to obtain stronger bounds.
Galli and Letchford~\cite{GalliLetchford} proved that it is equivalent to 
include non-negativity 
constraints into~\eqref{lovaszTheta_nPlus1} and~\eqref{lovaszTheta_n}, 
so $\zESCHG{2}$ is a stronger bound than this one because it induces 
non-negativity in~\eqref{lovaszTheta_nPlus1}.
Lemma~\ref{lem:XIInSTAB2GIIffXIInSTAB2EmptyGraph} implies 
that also for~\eqref{lovaszTheta_n} it is equivalent to include 
$\XI \in \STAB^{2}(\GI)$ and  
$\XI \in \STAB^{2}(\GZeroi{\kI})$, 
so $\zCESCHG{2}$ induces non-negativity due 
to~\eqref{ineq:facetGreater0}.
Hence, also $\zCESCHG{2}$ is as least as good as the bound of 
Schrijver.

Dukanovic and Rendl~\cite{DukanovicRendl} proposed to add so-called 
triangle inequalities to~\eqref{lovaszTheta_n}. 
Silvestri~\cite{SilvestriThesis} showed that $\zCESCHG{3}$ is at least as good 
as upper bound as the 
bound of Dukanovic and Rendl. 
This is intuitive, because the triangle 
inequalities correspond to~\eqref{inequ:homog3_1},~\eqref{inequ:homog3_2} 
and~\eqref{inequ:homog3_3} and therefore represent faces of $\STAB^{2}(\GI)$ 
for $\kI = 3$.
As a result, the CESH can be seen as a 
generalization of the relaxation of~\cite{DukanovicRendl}.

\subsection{Comparison of the CESH and the ESH}
\label{sec:CompareESHandCESH}
Now we continue our comparison of the bounds 
based on 
the ESH and our new CESH. 

\begin{theorem}
    \label{lem:CompareBoundsJ}
    Let $G = (V,E)$ be a graph with $|V| = n$ and 
    let $J$ be a set of subsets of $V$.
    Then $\zESCJ \leqslant \zCESCJ$.
\end{theorem}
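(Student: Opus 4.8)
The plan is to prove the inequality by converting a maximizer of the ESH program~\eqref{SDPRelaxationWithESCHierarchyJ} into a feasible point of the CESH program~\eqref{SDPRelaxationWithCESCHierarchyJ} of no smaller objective value, using the Gruber--Rendl scaling $X\mapsto\frac{1}{\trace(X)}X$ recalled in \autoref{sec:Compare2ThetaFunction}. Let $(\xOpt,\XOpt)$ attain $\zESCJ$, so that $(\xOpt,\XOpt)\in\ThetaBody^2(G)$, $\XOptI\in\STAB^2(\GI)$ for every $I\in J$, and $\allones{n}^{\transposedVec}\xOpt=\zESCJ$. Since $\diag(\XOpt)=\xOpt$, the objective value equals $c:=\trace(\XOpt)$, and since~\eqref{SDPRelaxationWithESCHierarchyJ} is a relaxation of~\eqref{eq:SdpWithESCOfSizeNIsAlpha} we have $c=\zESCJ\geqslant\alpha(G)\geqslant 1$; in particular the scaling factor $1/c$ is well defined and lies in $(0,1]$. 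I would then set $\XOptC:=\frac{1}{c}\XOpt$ and show that $\XOptC$ is feasible for~\eqref{SDPRelaxationWithCESCHierarchyJ} with $\langle\allones{n\times n},\XOptC\rangle\geqslant\zESCJ$; this immediately gives $\zCESCJ\geqslant\langle\allones{n\times n},\XOptC\rangle\geqslant\zESCJ$, which is the claim.

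The membership $\XOptC\in\CompressedThetaBody^2(G)$ is routine: $\XOptC$ is symmetric, $\trace(\XOptC)=\frac{1}{c}\trace(\XOpt)=1$, and $\XOptC_{i,j}=0$ for all $\{i,j\}\in E$ because $\XOpt$ already satisfies these equalities. Moreover $\XOpt=(\XOpt-\xOpt(\xOpt)^{\transposedVec})+\xOpt(\xOpt)^{\transposedVec}\succcurlyeq 0$ as a sum of two positive semidefinite matrices, hence $\XOptC=\frac{1}{c}\XOpt\succcurlyeq 0$. For the objective value I would test the positive semidefinite matrix $\XOpt-\xOpt(\xOpt)^{\transposedVec}$ against the all-ones vector to get $\langle\allones{n\times n},\XOpt\rangle=\allones{n}^{\transposedVec}\XOpt\allones{n}\geqslant(\allones{n}^{\transposedVec}\xOpt)^2=c^2$, whence $\langle\allones{n\times n},\XOptC\rangle=\frac{1}{c}\langle\allones{n\times n},\XOpt\rangle\geqslant\frac{1}{c}c^2=c=\zESCJ$.

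The only step with actual content is that the exact subgraph constraints survive the scaling, i.e.\ $\XOptCI\in\STAB^2(\GI)$ for every $I\in J$. Here I would use that the empty set is a stable set of $\GI$, so $\allzeros{\kI}=\allzeros{\kI}\allzeros{\kI}^{\transposedVec}$ is a stable set matrix of $\GI$ and thus $\allzeros{\kI}\in\STAB^2(\GI)$. Consequently $\STAB^2(\GI)$ is closed under multiplication by scalars in $[0,1]$: for $M\in\STAB^2(\GI)$ and $\tau\in[0,1]$ one has $\tau M=\tau M+(1-\tau)\allzeros{\kI}\in\STAB^2(\GI)$. Applying this with $\tau=\frac{1}{c}\in(0,1]$ and $M=\XOptI\in\STAB^2(\GI)$ gives $\XOptCI=\frac{1}{c}\XOptI\in\STAB^2(\GI)$, so $\XOptC$ is feasible for~\eqref{SDPRelaxationWithCESCHierarchyJ}. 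This structural fact about $\STAB^2(\GI)$ is the one place where a property special to squared stable set polytopes is needed; everything else is linear algebra. The only remaining subtlety to double-check is that the maximum in~\eqref{SDPRelaxationWithESCHierarchyJ} is actually attained, which holds because its feasible region is a closed subset of the compact set $\ThetaBody^2(G)$; alternatively one carries the whole argument through for an $\varepsilon$-optimal solution and lets $\varepsilon\to 0$.
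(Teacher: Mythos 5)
Your proposal is correct and follows essentially the same route as the paper: the Gruber--Rendl scaling $\XOptC=\frac{1}{\gamma}\XOpt$, the observation that $\langle\allones{n\times n},\XOpt\rangle\geqslant\gamma^2$ via testing $\XOpt-\xOpt(\xOpt)^{\transposedVec}\succcurlyeq 0$ against $\allones{n}$, and the preservation of the ESCs under scaling by redistributing the lost convex weight onto the zero stable set matrix (the paper writes out the shifted coefficients $\lambdaICi{i}$ explicitly, whereas you package the same idea as ``$\STAB^2(\GI)$ contains $\allzeros{\kI\times\kI}$ and is therefore closed under scaling by $\tau\in[0,1]$''). Your explicit justification that $\gamma\geqslant 1$ and that the maximum is attained are small points the paper leaves implicit, but the argument is the same.
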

\begin{proof}
    We consider the transformation of an 
    optimal solution of~\eqref{lovaszTheta_nPlus1} into an optimal 
    solution of~\eqref{lovaszTheta_n} by Gruber and 
    Rendl~\cite{GruberRendl}.
    We show that this transformation applied to the optimal solution 
    of~\eqref{SDPRelaxationWithESCHierarchyJ} yields a feasible solution 
    of~\eqref{SDPRelaxationWithCESCHierarchyJ} with at least the same objective 
    function 
    value, thus $\zESCJ \leqslant \zCESCJ$ holds.
    
    Towards that end, let $(\xOpt,\XOpt)$ be an optimal solution 
    of~\eqref{SDPRelaxationWithESCHierarchyJ} and
     $\gamma = \zESCJ = \allones{n}^{\transposedVec}\xOpt$ its objective 
    function value.
    Let $\XOptC = \frac{1}{\gamma}\XOpt$.
    
    First, we show that $\XOptC$ is feasible 
    for~\eqref{SDPRelaxationWithCESCHierarchyJ}.
    Clearly 
    $\XOpt -\xOpt (\xOpt)^{\transposedVec} \succcurlyeq 0$ 
    and $\gamma \geqslant 0$ imply
    $\XOptC \succcurlyeq 0$.
    Furthermore, due to $\XOpt_{i,j} = 0$ for all  $\{i,j\} \in E$ we have 
    $\XOptC_{i,j} = 0$ for all $\{i,j\} \in E$.
    Additional to that
    \begin{equation*}
    \trace(\XOptC) = \frac{1}{\gamma}\trace(\XOpt)
    = \frac{1}{\gamma} \allones{n}^{\transposedVec}\xOpt 
    = \frac{1}{\gamma}\gamma
    = 1,
    \end{equation*}
    so $\XOptC$ is feasible for~\eqref{lovaszTheta_n}.
    
    What is left to check for feasibility are the ESCs.
    We can rewrite $\XOptI \in \STAB^2(\GI)$ as 
    $\XOptI = \sum_{i=1}^{\tI} \lambdaIi{i}\SIi$ 
    for $\sum_{i=1}^{\tI}\lambdaIi{i} = 1$ and $\lambdaIi{i} \geqslant 0$ for 
    all $1 \leqslant i \leqslant \tI$. 
    Let w.l.o.g.\ $\SIiIndex{1}$ be the zero matrix of 
    dimension $\kI \times \kI$, i.e. the first stable set matrix corresponds to 
    the empty set.
    Then we define
    \begin{equation*}
    \lambdaICi{i} = \left\{\begin{array}{ll}
    \frac{1}{\gamma} \lambdaIi{i} & \text{for } 2 \leqslant i \leqslant \tI\\
     \frac{1}{\gamma} \lambdaIi{i} +  \frac{\gamma-1}{\gamma} 
     & \text{for } i = 1.
    \end{array}\right.
    \end{equation*}
    It is easy to see that $ \lambdaICi{i} \geqslant 0$ for all 
    $1 \leqslant i \leqslant \tI$ and that 
    \begin{equation*}
    \sum_{i=1}^{\tI}\lambdaICi{i} 
    = \frac{1}{\gamma} \lambdaIi{1} +  \frac{\gamma-1}{\gamma} 
     + \frac{1}{\gamma}\sum_{i=2}^{\tI}\lambdaIi{i} 
    = \frac{1}{\gamma} +   \frac{\gamma-1}{\gamma} 
    = 1
    \end{equation*}
    holds.
    Furthermore, because $\SIiIndex{1}$ is a zero matrix and so 
    $\frac{\gamma-1}{\gamma} \SIiIndex{1} = 0$, we have 
    \begin{equation*}
    \XOptCI = \frac{1}{\gamma}\XOptI
    = \sum_{i=1}^{\tI} \frac{1}{\gamma}\lambdaIi{i}\SIi
    = \left(\frac{1}{\gamma}\lambdaIi{i} +  \frac{\gamma-1}{\gamma} 
    \right)\SIiIndex{1}
    + \sum_{i=2}^{\tI} \lambdaICi{i}\SIi
    = \sum_{i=1}^{\tI} \lambdaICi{i}\SIi.
    \end{equation*}
    As a consequence $\XOptCI \in \STAB^2(\GI)$ and thus $\XOptCI$ is feasible 
    for~\eqref{SDPRelaxationWithCESCHierarchyJ}.
    
    It remains to determine the objective function value of $\XOptCI$ 
    for~\eqref{SDPRelaxationWithCESCHierarchyJ}.
    From $\XOpt -\xOpt (\xOpt)^{\transposedVec} \succcurlyeq 0$ it follows that
    $\allones{n}^{\transposedVec}
    (\XOpt -\xOpt (\xOpt)^{\transposedVec})\allones{n} \geqslant 0$
    and hence
    $\langle \allones{n\times n}, 
    \XOpt -\xOpt (\xOpt)^{\transposedVec} \rangle \geqslant 0$.
    This implies that
\begin{equation*}
    \langle \allones{n\times n}, \XOpt \rangle \geqslant
    \langle  \allones{n\times n}, \xOpt (\xOpt)^{\transposedVec} \rangle 
    = \allones{n}^{\transposedVec}\xOpt (\xOpt)^{\transposedVec}\allones{n}
    = (\allones{n}^{\transposedVec}\xOpt)^2 
    = \gamma^2
\end{equation*}
    holds, thus 
\begin{equation*}
    \langle \allones{n\times n}, \XOptC  \rangle
    = \frac{1}{\gamma}\langle \allones{n\times n}, \XOpt  \rangle
    \geqslant \frac{1}{\gamma}\gamma^2 = \gamma.
\end{equation*}
    
    To summarize, $\XOptC$ is a feasible solution 
    of~\eqref{SDPRelaxationWithCESCHierarchyJ} with objective function value
    $\gamma = \zESCJ$. Therefore, the optimal objective function value of 
    the 
    maximization problem~\eqref{SDPRelaxationWithCESCHierarchyJ} is at least 
    $\zESCJ$, so $\zESCJ \leqslant \zCESCJ$.
\end{proof}

Theorem~\ref{lem:CompareBoundsJ} states that the bounds obtained by 
starting 
from~\eqref{lovaszTheta_nPlus1} and including some ESCs is always at least 
as good as the bound obtained by starting from~\eqref{lovaszTheta_n} and 
including the same ESCs. 
In particular, this implies that the relaxation on the $k$-th level of the 
ESH 
is at least as good as the relaxation on the $k$-th level of the CESH,
which is formalized in the following corollary.
\begin{corollary}
    \label{lem:CompareBoundsH}
    Let $G = (V,E)$ be a graph with $|V| = n$ and let $k \in \Nzero$, $k 
    \leqslant 
    n$. 
    Then $\zESCHG{k} \leqslant \zCESCHG{k}$.
\end{corollary}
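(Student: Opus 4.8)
The plan is to obtain this simply by specializing \autoref{lem:CompareBoundsJ}. Recall that by definition $\zESCHG{k} = \zESCJk$ and $\zCESCHG{k} = \zCESCJk$, where $\Jk{k} = \{I \subseteq V : |I| = k\}$ is a particular set of subsets of $V$. So the strategy is to invoke \autoref{lem:CompareBoundsJ} with the specific choice $J = \Jk{k}$.

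Concretely, I would first note that $\Jk{k}$ is a legitimate set of subsets of $V$, so \autoref{lem:CompareBoundsJ} applies verbatim and yields $\zESCJnoG[\Jk{k}](G) \leqslant \zCESCJnoG[\Jk{k}](G)$. Then I would simply rewrite both sides using the definitions of the $k$-th levels of the ESH and the CESH, obtaining $\zESCHG{k} \leqslant \zCESCHG{k}$, which is the claim.

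There is essentially no obstacle here: the corollary is a direct instantiation of the theorem already proved, and no further argument — in particular no new construction of feasible solutions — is needed. The only thing to be careful about is bookkeeping, namely making sure the notation $\zESCHG{k}$, $\zCESCHG{k}$ is correctly matched to $\zESCJk$, $\zCESCJk$, which is immediate from the respective definitions.

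\begin{proof}
    Apply \autoref{lem:CompareBoundsJ} with $J = \Jk{k} = \{I \subseteq V : |I| = k\}$. This gives $\zESCJk \leqslant \zCESCJk$, which by definition of the $k$-th levels of the ESH and the CESH is exactly $\zESCHG{k} \leqslant \zCESCHG{k}$.
\end{proof}
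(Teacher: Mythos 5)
Your proposal is correct and matches the paper exactly: the paper states this corollary as an immediate consequence of \autoref{lem:CompareBoundsJ}, obtained precisely by specializing to $J = \Jk{k}$ and unfolding the definitions $\zESCHG{k} = \zESCJk$ and $\zCESCHG{k} = \zCESCJk$. No further argument is needed.
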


We now further investigate the theoretical difference between the ESH 
and the CESH,
especially in the light of the results of Galli and 
Letchford~\cite{GalliLetchford}.
They proved that whenever a collection of  
homogeneous inequalities is added to~\eqref{lovaszTheta_nPlus1}, the resulting 
optimal solution yields a feasible solution for~\eqref{lovaszTheta_n} with the 
same collection of 
inequalities, which has at least the same objective function value. 
This implies that adding homogeneous inequalities to~\eqref{lovaszTheta_nPlus1} 
gives stronger bounds on $\alpha(G)$ than adding the same inequalities 
to~\eqref{lovaszTheta_n}.

If we consider the ESCs in more detail as we did in 
Section~\ref{sec:ESCAsInequalities}, then in turns out 
that for 
$k=2$ the 
inequalities~\eqref{ineq:facetGreater0},~\eqref{ineq:facetDiagGreaterOffDiag} 
and~\eqref{ineq:facetDiagGreaterOffDiag2}
are homogeneous, while~\eqref{ineq:facetSumDiagSmaller1PlusOffDiag} is 
inhomogeneous, so inhomogeneous inequalities are needed to represent ESCs.

Next, we give an intuition for 
the different behavior of inhomogeneous inequalities 
for the two SDP formulations of the Lov\'{a}sz 
theta 
function~\eqref{lovaszTheta_nPlus1} and~\eqref{lovaszTheta_n}.
Let $(\xOpt,\XOpt)$ be an optimal solution 
of~\eqref{lovaszTheta_nPlus1} with additional 
constraints~\eqref{ineq:ESC2}. 
From the proof of Lemma~\ref{lem:CompareBoundsJ} we know that 
$\XOptC = \frac{1}{\gamma}\XOpt$ 
is a feasible solution  
of~\eqref{lovaszTheta_n} with additional constraints~\eqref{ineq:ESC2}. 
Indeed, the homogeneous 
inequalities~\eqref{ineq:facetGreater0},~\eqref{ineq:facetDiagGreaterOffDiag} 
and~\eqref{ineq:facetDiagGreaterOffDiag2} are preserved under scaling,  
matching~\cite{GalliLetchford}.
Scaling~\eqref{ineq:facetSumDiagSmaller1PlusOffDiag} 
with~$\frac{1}{\gamma}$ yields that $\XOptC$ satisfies
\begin{equation*}
\XOptC_{i,i} + \XOptC_{j,j}  \leqslant \frac{1}{\gamma} 
+ \XOptC_{i,j} 
\end{equation*}
and since $\frac{1}{\gamma} \leqslant 1$ it follows that $\XOptC$ 
satisfies~\eqref{ineq:facetSumDiagSmaller1PlusOffDiag}.

If $\XOptC$ is an optimal solution 
of~\eqref{lovaszTheta_n} with additional 
constraints~\eqref{ineq:ESC2}
and we use the transformation $\XOpt =\gamma\XOptC$, 
then clearly $\XOpt$ 
satisfies~\eqref{ineq:facetGreater0},~\eqref{ineq:facetDiagGreaterOffDiag} 
and~\eqref{ineq:facetDiagGreaterOffDiag2}. 
Scaling~\eqref{ineq:facetSumDiagSmaller1PlusOffDiag} with $\gamma$ 
yields that 
\begin{equation*}
\XOpt_{i,i} + \XOpt_{j,j}  \leqslant \gamma 
+ \XOpt_{i,j} 
\end{equation*}
holds for $\XOpt$. This does not imply that $\XOpt$ 
fulfills~\eqref{ineq:facetSumDiagSmaller1PlusOffDiag}
as $\gamma \geqslant 1$.

To summarize, this consideration confirms that the ESCs for $\kI = 2$ 
yield a 
stronger restriction in~\eqref{lovaszTheta_nPlus1} than 
they do in~\eqref{lovaszTheta_n}. 
This gap of the bounds gets even larger for larger $\kI$, so 
for example for $\kI=3$ 
the  inequality~\eqref{inequ:inhomog3} is inhomogeneous.
This concludes our investigation of the new CESH.

\section{The Scaled Exact Subgraph Hierarchy}
\label{sec:scaledESC}

In Section~\ref{esc:compressedESH} we saw that including an ESC 
	into~\eqref{lovaszTheta_nPlus1} as in the ESH gives a stronger bound than 
	including the 
	same 
	ESC into~\eqref{lovaszTheta_n} as in the CESH.
	In this section we investigate whether this is due to a suboptimal 
	definition of the ESCs for the later case. 
	In particular, we go back to the intuition behind 
	ESCs for~\eqref{lovaszTheta_nPlus1} and transfer this intuition 
	to~\eqref{lovaszTheta_n}. This will lead to the new definitions of scaled 
	ESCs
	and the scaled ESH. We will explore this hierarchy and compare the CESH and 
	the scaled ESH in detail.

\subsection{Introduction of the Scaled Exact Subgraph Hierarchy}

To start, observe the following.
It can be confirmed easily that both~\eqref{lovaszTheta_nPlus1} 
and~\eqref{lovaszTheta_n} are upper bounds on 
$\alpha(G)$. 
Let $s \in \mathcal{S}(G)$ be a stable 
set vector that corresponds to a maximum stable set. 
Then $X^\ast = ss^T$ is feasible 
for~\eqref{lovaszTheta_nPlus1} and has 
objective function value $\alpha(G)$.
Therefore, intuitively $\STAB^2(G)$ defines exactly the 
appropriate polytope for~\eqref{lovaszTheta_nPlus1}.

For~\eqref{lovaszTheta_n} the matrix $X' = \frac{1}{s^{T}s}ss^T$ yields 
a feasible solution with objective function value $\alpha(G)$, whereas 
$X^\ast=ss^T$ is not feasible unless $\alpha(G)=1$.
Hence, intuitively it makes more sense to consider the 
polytope spanned by matrices of the form $\frac{1}{s^{T}s}ss^T$ for $s \in 
\mathcal{S}(G)$ for~\eqref{lovaszTheta_n} than to consider $\STAB^2(G)$.
This leads to the following definition.

\begin{definition}
    \label{def:SSTAB2}
    Let $G = (V,E)$ be a graph with $|V| = n$. Then the scaled squared stable 
    set polytope 
    $\SSTAB^{2}(G)$ of $G$ is defined as 
    \begin{equation*}
    \SSTAB^{2}(G) = \conv\left(
    \left\{\frac{1}{s^{T}s}ss^{\transposedVec}: s \in 
    \mathcal{S}(G),~ 
    s \neq \allzeros{n}\right\}
    \cup \{\allzeros{n\times n}\} \right).
    \end{equation*}  
\end{definition}

The goal of this section is to investigate a new modified version of 
the CESH based on the scaled squared stable set polytope defined in the 
following way.

\begin{definition}
    Let $G = (V,E)$ be a graph and let $I \subseteq V$.
    Then the scaled exact subgraph constraint (SESC) for $\GI$ is defined as
    %\begin{equation*}
    $
    \XI \in \SSTAB^{2}(\GI).
    $
    %\end{equation*}    
    Furthermore, let $|V| = n$ and
    let $J$ be a set of subsets of $V$. Then $\zSESCJ$ is the optimal 
    objective function value 
    of~\eqref{lovaszTheta_n} with the SESC for every 
    subgraph induced by a set in $J$, so
    \begin{equation}
    \label{SDPRelaxationWithSESCHierarchyJ}
    \zSESCJ = 
    \max \left\{
    \langle \allones{n\times n}, X  \rangle: 
    X \in \CompressedThetaBody^2(G),~
    \XI \in \SSTAB^{2}(\GI) \quad \forall I \in J
    \right\}.
    \end{equation}
    
    For $k \in \Nzero$ with 
    $k \leqslant n$
    the $k$-th level of the scaled exact subgraph hierarchy (SESH) 
    is defined as
    $\zSESCHG{k} = \zSESCJk$.
\end{definition}

Note that with the considerations above  it 
does not make sense to 
include the SESC for the whole graph $G$ into~\eqref{lovaszTheta_nPlus1}, as 
this SDP does 
not yield an upper bound on $\alpha(G)$, because all solutions 
corresponding to $\alpha(G)$ are not feasible.
Hence, we introduce a hierarchy based on SESCs only starting 
from~\eqref{lovaszTheta_n} and not from~\eqref{lovaszTheta_nPlus1}.

Additionally, note that a priory we do not know whether the SESH has as 
nice properties as the ESH and the CESH.

\subsection{Comparison of the SESH and the CESH}
The next lemma 
is the key 
ingredient to 
compare the SESH to the CESH.

\begin{lemma}
    \label{lem:XSSTABiffXSTABandTraceLeq1}
Let $G = (V,E)$ be a graph. Then
$X \in \SSTAB^2(G)$ holds
if and only if 
$X \in \STAB^2(G)$  and  $\trace(X) \leqslant 1$.
\end{lemma}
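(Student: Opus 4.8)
The plan is to prove the two implications separately by purely elementary manipulation of convex combinations, the only arithmetic input being that $\trace(ss^{\transposedVec}) = s^{\transposedVec}s$ is the cardinality of the stable set encoded by $s$, hence a positive integer (so $s^{\transposedVec}s \geqslant 1$) whenever $s \in \mathcal{S}(G)$ and $s \neq \allzeros{n}$, and $0$ when $s = \allzeros{n}$.

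For the direction ``$\Rightarrow$'' I would first check that $\SSTAB^2(G) \subseteq \STAB^2(G)$: for $s \in \mathcal{S}(G)$ with $s \neq \allzeros{n}$ one has $\frac{1}{s^{\transposedVec}s} \in (0,1]$, so $\frac{1}{s^{\transposedVec}s}ss^{\transposedVec} = \frac{1}{s^{\transposedVec}s}(ss^{\transposedVec}) + \bigl(1 - \frac{1}{s^{\transposedVec}s}\bigr)\allzeros{n\times n}$ is a convex combination of the two stable set matrices $ss^{\transposedVec}$ and $\allzeros{n\times n} = \allzeros{n}\allzeros{n}^{\transposedVec}$; since $\allzeros{n\times n}$ itself lies in $\STAB^2(G)$ and $\STAB^2(G)$ is convex, every generator of $\SSTAB^2(G)$ — and hence all of $\SSTAB^2(G)$ — is contained in $\STAB^2(G)$. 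Next, each generator of $\SSTAB^2(G)$ has trace $\leqslant 1$ (trace exactly $1$ for the scaled matrices, $0$ for $\allzeros{n\times n}$), so by linearity of the trace any $X \in \SSTAB^2(G)$ satisfies $\trace(X) \leqslant 1$.

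For the direction ``$\Leftarrow$'' I would take $X \in \STAB^2(G)$ with $\trace(X) \leqslant 1$, write $X = \sum_{i=1}^{t} \lambda_i s_i s_i^{\transposedVec}$ with $\lambda \in \simplex_t$ and $s_i \in \mathcal{S}(G)$, and drop the terms with $s_i = \allzeros{n}$ (which contribute $\allzeros{n\times n}$). For the remaining indices set $a_i = s_i^{\transposedVec}s_i \geqslant 1$ and $\mu_i = \lambda_i a_i \geqslant 0$, so that $\lambda_i s_i s_i^{\transposedVec} = \mu_i \cdot \frac{1}{a_i}s_i s_i^{\transposedVec}$ and $\sum_i \mu_i = \sum_i \lambda_i \trace(s_i s_i^{\transposedVec}) = \trace(X) \leqslant 1$. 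Then $X = \sum_i \mu_i\,\frac{1}{a_i}s_i s_i^{\transposedVec} + \bigl(1 - \sum_i \mu_i\bigr)\allzeros{n\times n}$ exhibits $X$ as a convex combination of generators of $\SSTAB^2(G)$, whence $X \in \SSTAB^2(G)$.

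There is no genuine obstacle in this argument; the points that need a little care are that $\allzeros{n} \in \mathcal{S}(G)$ so that the all-zeros matrix is simultaneously a generator of $\SSTAB^2(G)$ and an element of $\STAB^2(G)$, that $\frac{1}{s^{\transposedVec}s} \leqslant 1$ so the rescaling above is an honest convex combination, and — crucially — that the slack coefficient $1 - \sum_i \mu_i$ is nonnegative, which is precisely the place where the hypothesis $\trace(X) \leqslant 1$ is used.
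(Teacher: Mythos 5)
Your proof is correct and follows essentially the same route as the paper: both directions rest on the same coefficient rescaling by $s^{\transposedVec}s \geqslant 1$ with the deficit absorbed into the all-zeros matrix, and the hypothesis $\trace(X) \leqslant 1$ enters in exactly the same place. The only cosmetic difference is that in the forward direction you argue generator-by-generator and invoke convexity of $\STAB^{2}(G)$ and linearity of the trace, whereas the paper rewrites the coefficients of the full convex combination explicitly; the content is identical.
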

\begin{proof}
Let $\mathcal{S}(G) = \{s_{1}, \dots, s_{t}\}$ and let w.l.o.g.\ $s_{1}= 
\allzeros{n}$, i.e.\ the first stable set is the empty set.

If $X \in \SSTAB^2(G)$, then $X$ can be written as
\begin{equation*}
X = \lambdaS_{1}\allzeros{n\times n} + \sum_{i=2}^{t} 
\lambdaS_{i}\frac{1}{s_{i}^{\transposedVec}s_{i}}s_{i}s_{i}^{\transposedVec}
\end{equation*}
for some $\lambdaS \in \simplex_t$.
It is easy to see that 
\begin{equation*}
\trace(X) 
= \lambdaS_{1}\trace(\allzeros{n\times n}) + \sum_{i=2}^{t} 
\lambdaS_{i}\frac{1}{s_{i}^{\transposedVec}s_{i}}\trace(s_{i}s_{i}^{\transposedVec})
= \sum_{i=2}^{t} \lambdaS_{i} \leqslant 1
\end{equation*}
holds. We define 
$\lambdaN_{i} = \lambdaS_{i}\frac{1}{s_{i}^{\transposedVec}s_{i}}$ for 
$2 \leqslant i \leqslant t$. Then clearly 
$\lambdaS_{i} 
\geqslant \lambdaS_{i}\frac{1}{s_{i}^{\transposedVec}s_{i}} 
=  \lambda_{i} \geqslant 0$ 
holds because $s_{i}^{\transposedVec}s_{i} \geqslant 1$ for all 
$2 \leqslant i \leqslant t$.
Let $\lambdaN_{1} 
= 1 - \sum_{i=2}^{t} \lambdaN_{i}$, then $\lambdaN_{1} \geqslant 1 -  
\sum_{i=2}^{t} \lambdaS_{i} = 
\lambdaS_{1} \geqslant 0$ holds.
Hence
\begin{equation*}
X = \lambdaN_{1}\allzeros{n\times n} + \sum_{i=2}^{t} 
\lambdaN_{i}s_{i}s_{i}^{\transposedVec}
\end{equation*}
for $\lambdaN \in \simplex_t$ and therefore $X \in \STAB^2(G)$. 
Hence, $X \in \SSTAB^2(G)$ implies that $X \in \STAB^2(G)$ and 
$\trace(X) \leqslant 1$ holds.

Now assume $X \in \STAB^2(G)$ and $\trace(X) \leqslant 1$.
Then $X$ can be rewritten as
\begin{equation*}
X = \lambdaN_{1}\allzeros{n\times n} + \sum_{i=2}^{t} 
\lambdaN_{i}s_{i}s_{i}^{\transposedVec}
\end{equation*}
for some $\lambdaN \in \simplex_t$.
Then, because $\trace(s_{i}s_{i}^{\transposedVec}) = 
s_{i}^{\transposedVec}s_{i}$, we have  
\begin{equation}
\label{eq:traceLeq1}
1 \geqslant \trace(X) 
=  \lambdaN_{1}\trace(\allzeros{n\times n}) + \sum_{i=2}^{t} 
\lambdaN_{i}\trace(s_{i}s_{i}^{\transposedVec}) 
= \sum_{i=2}^{t} 
\lambdaN_{i}s_{i}^{\transposedVec}s_{i}.
\end{equation}
We define $\lambdaS_{i} = \lambdaN_{i}s_{i}^{\transposedVec}s_{i}$ for $2 
\leqslant i \leqslant t$ and 
$\lambdaS_{1} = 1 - \sum_{i=2}^{t} \lambdaS_{i}$. 
Then clearly $\lambdaS_{i} \geqslant 0$ holds for $2 
\leqslant i \leqslant t$. Furthermore, \eqref{eq:traceLeq1} implies that 
$\lambdaS_{1} \geqslant 0$ holds, so $\lambdaS \in \simplex_{t}$. 
This together with 
\begin{equation*}
X = \lambdaS_{1}\allzeros{n\times n} + \sum_{i=2}^{t} 
\lambdaS_{i} \frac{1}{s_{i}^{\transposedVec}s_{i}} s_{i}s_{i}^{\transposedVec}
\end{equation*}
implies that $X \in \SSTAB^2(G)$.
\end{proof}

Lemma~\ref{lem:XSSTABiffXSTABandTraceLeq1} 
allows us to prove the following.
\begin{theorem}
    \label{thm:CompareBoundsJScaled}
    Let $G = (V,E)$ be a graph and 
    let $J$ be a set of subsets of $V$.
    Then $\zSESCJ = \zCESCJ$.
    In particular, $\zSESCHG{k} = \zCESCHG{k}$. 
\end{theorem}
\begin{proof}
    Due to Lemma~\ref{lem:XSSTABiffXSTABandTraceLeq1} the SESC 
    $\XI \in \SSTAB^2(\GI)$ in  $\zSESCJ$ 
    can be replaced by the ESC $\XI \in \STAB^2(\GI)$ and 
    $\trace(\XI) \leqslant 1$. 
    The latter is redundant,    
    as $\trace(X) = 1$ is fulfilled by all $X \in \CompressedThetaBody^{2}(G)$ 
    and all elements on the main diagonal of $X$ are non-negative because 
    $X \succcurlyeq 0$.
    Thus, $\zSESCJ = \zCESCJ$ and $\zSESCHG{k} = \zCESCHG{k}$ hold.
\end{proof}

Theorem~\ref{thm:CompareBoundsJScaled} implies that the SESH and the CESH 
coincide 
and in particular that the SESH has the same properties as the CESH stated in 
Lemma~\ref{lem:PropertiesESH}, which we now forumlate explicitly.

\begin{corollary}
    \label{cor:PropertiesSESH}
    Let $G = (V,E)$ be a graph with $|V| = n$. 
    Then
    \begin{equation*}
    \vartheta(G) = \zSESCHG{0} =  \zSESCHG{1} \geqslant \zSESCHG{k-1} \geqslant 
    \zSESCHG{k} \geqslant \zSESCHG{n} = \alpha(G)
    \end{equation*}
    holds for all $k \in \{1, \dots, n\}$.
\end{corollary}

Hence, even though intuitively it makes more sense to add SESCs 
into~\eqref{lovaszTheta_n} instead of ESCs, 
both versions give the same bound and the SESH and the CESH coincide.

\section{Computational Comparison}
\label{sec:computations}

In the previous sections we have theoretically investigated first the 
original ESH, which starts from~\eqref{lovaszTheta_nPlus1} and includes 
ESCs. 
Next, we introduced the CESH, which starts from~\eqref{lovaszTheta_n} and 
includes ESCs 
and finally the SESH which starts from~\eqref{lovaszTheta_n} and includes SESCs.
Each of these hierarchies can be exploited computationally by including a 
wisely chosen subset $J$ of all possible ESCs or SESCs. 
We denote the 
resulting bounds based on the ESH, the CESH and the SESH by $\zESCJ$, 
$\zCESCJ$ and $\zSESCJ$, respectively. 
So far we 
have proven in Lemma~\ref{lem:CompareBoundsJ} and 
Theorem~\ref{thm:CompareBoundsJScaled}
that $\zSESCJ = \zCESCJ \geqslant \zESCJ$ holds for all 
graphs $G$ and for all set of subsets $J$, 
hence the bounds based on the CESH and the SESH coincide and the bounds 
based on the ESH are always as least as good as those bounds.

In this section we compare the ESH and the CESH computationally. 
We refrain from computations with 
SESH since both the obtained bounds and the sizes of the SDPs are the 
same for SESH and CESH.
First, we are interested in whether $\zESCJ$ 
is significantly 
better than $\zCESCJ$.
Second, we are interested in the running times. In theory, the running 
times for $\zCESCJ$ should be smaller, because the matrix variable is 
of order $n$ instead of $n+1$ and the number of equality constraints is $n$ 
less.

We consider several graphs in various settings.
Some graphs are from the Erd\H{o}s-R\'enyi model $G(n,p)$ for 
different values of $n$ and $p$ (the probability that 
an edge is present in the graph), 
some are complement graphs of graphs of the second DIMACS implementation 
challenge~\cite{DIMACS1992} 
and some come from the house of graphs 
collection~\cite{HouseOfGraphs}. 
Furthermore, there is a spin glass graph 
(see~\cite{FischerGruberRendlSotirovBundleMaxCutEquipartition}), 
a Paley graph, a circulant and a cubic graph among the 
instances.
In the computations we always compare including all ESCs 
of the same set $J$ into~\eqref{lovaszTheta_nPlus1} and~\eqref{lovaszTheta_n}, 
so we compute $\zESCJ$ and $\zCESCJ$. 
The source code and all the used graphs are available online at 
	\url{https://arxiv.org/src/2003.13605/anc}.

All computations are done on an Intel(R) Core(TM) i7-7700 CPU @ 3.60GHz 
with 32 GB RAM with MATLAB. %version R2016b.
We use the interior point solver MOSEK~\cite{mosek} for solving the SDPs.
Note that there is a lot of research on how to solve SDPs of the 
form~\eqref{SDPRelaxationWithESCHierarchyJ} much faster using the bundle 
method, see Gaar~\cite{elli-diss} and Gaar and 
Rendl~\cite{GaarRendl,GaarRendlFull}.
We refrain from using these involved methods, as we are 
interested in comparing the bounds in a simple way.

In the first experiment, we compare levels of the ESH and the CESH. For 
including all possible ESCs of order $k$ 
into a graph of order $n$ we have $\binom{n}{k}$ additional ESCs to the 
SDPs~\eqref{lovaszTheta_nPlus1} 
and~\eqref{lovaszTheta_n}, so these computations are out of reach rather 
quickly.
Table~\ref{tab:ttfallESC} summarizes the values of $\zCESCJ$ and $\zESCJ$ for 
including all ESCs for $k \in  \{0,2,3,4\}$ and presents the running times in 
seconds to solve the 
corresponding SDPs.

First, we note that indeed the computation of~$\vartheta(G)$ 
(corresponds to the column $k=0$) yields the same value for  
computing it via~\eqref{lovaszTheta_nPlus1} and~\eqref{lovaszTheta_n}. 
Furthermore, the 
computations confirm that $\zESCJ \leqslant \zCESCJ$ holds for all graphs~$G$. 
On the second level 
of the ESH and the CESH the two values coincide for almost all graphs. Only the 
instances HoG\_34272, HoG\_34274 and HoG\_34276 show a significant difference. 
On the third and fourth level the difference is more substantial. 
This is not surprising, as there are more inhomogeneous facets defining 
$\STAB^2$ in these cases. In the running times there is almost no difference 
for small graphs with not so many ESCs. Only if the number of ESCs becomes 
larger, typically the computation time for $\zCESCJ$ is significantly shorter. 
However, most of the times this comes with a worse bound.

%-------------------------------------------------------------------------------
% Computational Results obtained with
% /home/users/egaar/Documents/MATLAB/MyCode/Research/
%		ESHForDifferentThetaFormulations/compare_ESH_Theta_n_Theta_nPlus1.m
%
%
%-------------------------------------------------------------------------------

\begin{table}[ht]
\caption{The values of $\zESCJ$ and $\zCESCJ$ for different graphs $G$ with  
including all ESCs of order $0$ (corresponds to $\vartheta(G)$), $2$, $3$ and 
$4$ and the running times to compute the values}
\label{tab:ttfallESC} 
\begin{center}
    \begin{small}
        \hspace*{-1cm}   
    \setlength{\tabcolsep}{3pt}
\begin{tabular}{|l|r|r|rrrr|rrrr|}
\hline
Name & $n$ &&\multicolumn{4}{c|}{value of $\zESCJnoG$/$\zCESCJnoG$}
&\multicolumn{4}{c|}{running time}\\
\cline{4-11}
&$m$ &  & $\vartheta(G)$ & $J = J_2$ &  
$J = J_3$ & $J = J_4$ &$\vartheta(G)$ & $J = J_2$ &  
$J = J_3$ & $J = J_4$ \\    
\hline 
HoG\_34272      &    9 &  $\zESCJnoG$&   3.3380 &  3.2729 &  3.0605 & 3.0000 
&    0.03   &    0.04   &    0.35 &    0.42   \\
&   17 & $\zCESCJnoG$&   3.3380 &  3.2763 &  3.1765 & 3.0000 &    0.03   &    
0.04   &    0.10 &    0.19   \\ \hline 
HoG\_15599      &   20 &  $\zESCJnoG$&   7.8202 &  7.8202 &  7.4437 & 7.0000 
&    0.05   &    0.12   &   15.63 & 2545.20   \\
&   44 & $\zCESCJnoG$&   7.8202 &  7.8202 &  7.4761 & 7.4291 &    0.04   &    
0.11   &   16.27 & 3439.99   \\ \hline 
CubicVT26\_5    &   26 &  $\zESCJnoG$&  11.8171 & 11.8171 & 10.9961 &     \_ 
&    0.04   &    0.25   &  101.19 &    \_   \\
&   39 & $\zCESCJnoG$&  11.8171 & 11.8171 & 11.0035 &        &    0.04   &    
0.22   &   99.11 &    \\ \hline
HoG\_34274      &   36 &  $\zESCJnoG$&  13.2317 & 13.0915 & 12.1661 &     \_ 
&    0.05   &    1.27   & 4026.75 &   \_  \\
&   72 & $\zCESCJnoG$&  13.2317 & 13.1052 & 12.5881 &        &    0.05   &    
1.39   & 3700.50 &    \\ \hline
HoG\_6575       &   45 &  $\zESCJnoG$&  15.0530 & 15.0530 &    \_   &     \_ 
&    0.07   &    2.18   &    \_   &   \_  \\
&  225 & $\zCESCJnoG$&  15.0530 & 15.0530 &         &        &    0.05   &    
2.18   &         &    \\ \hline
MANN\_a9        &   45 &  $\zESCJnoG$&  17.4750 & 17.4750 &    \_   &     \_ 
&    0.05   &    2.38   &    \_   &    \_  \\
&   72 & $\zCESCJnoG$&  17.4750 & 17.4750 &         &        &    0.04   &    
2.70   &         &    \\ \hline
Circulant47\_30 &   47 &  $\zESCJnoG$&  14.3022 & 14.3022 &    \_   &     \_ 
&    0.07   &    3.80   &    \_   &   \_  \\
&  282 & $\zCESCJnoG$&  14.3022 & 14.3022 &         &        &    0.06   &    
3.34   &         &    \\ \hline
G\_50\_025      &   50 &  $\zESCJnoG$&  13.5642 & 13.4554 &    \_   &     \_ 
&    0.09   &    5.71   &    \_   &    \_  \\
&  308 & $\zCESCJnoG$&  13.5642 & 13.4555 &         &        &    0.07   &    
5.36   &         &    \\ \hline
G\_60\_025      &   60 &  $\zESCJnoG$&  14.2815 & 14.2013 &    \_   &     \_ 
&    0.13   &   12.40   &    \_   &   \_  \\
&  450 & $\zCESCJnoG$&  14.2815 & 14.2013 &         &        &    0.11   &   
12.19   &         &    \\ \hline
Paley61         &   61 &  $\zESCJnoG$&   7.8102 &  7.8102 &    \_   &     \_ 
&    0.23   &    6.52   &    \_   &    \_   \\
&  915 & $\zCESCJnoG$&   7.8102 &  7.8102 &         &        &    0.17   &    
6.22   &         &    \\ \hline
hamming6\_4     &   64 &  $\zESCJnoG$&   5.3333 &  4.0000 &    \_   &     \_ 
&    0.36   &   10.60   &    \_   &    \_  \\
& 1312 & $\zCESCJnoG$&   5.3333 &  4.0000 &         &        &    0.29   &    
8.44   &         &    \\ \hline
HoG\_34276      &   72 &  $\zESCJnoG$&  26.4635 & 26.1831 &    \_   &     \_ 
&    0.06   &   30.70   &    \_   &   \_   \\
&  144 & $\zCESCJnoG$&  26.4635 & 26.2105 &         &        &    0.07   &   
32.67   &         &    \\ \hline
G\_80\_050      &   80 &  $\zESCJnoG$&   9.4353 &  9.3812 &    \_   &     \_ 
&    0.87   &   60.25   &    \_   &    \_   \\
& 1620 & $\zCESCJnoG$&   9.4353 &  9.3812 &         &        &    0.85   &   
60.63   &         &    \\ \hline
G\_100\_025     &  100 &  $\zESCJnoG$&  19.4408 & 19.2830 &    \_   &     \_ 
&    0.61   &  170.61   &    \_   &    \_   \\
& 1243 & $\zCESCJnoG$&  19.4408 & 19.2830 &         &        &    0.52   &  
178.05   &         &    \\ \hline
spin5           &  125 &  $\zESCJnoG$&  55.9017 & 55.9017 &    \_   &     \_ 
&    0.17   &  309.35   &    \_   &   \_   \\
&  375 & $\zCESCJnoG$&  55.9017 & 55.9017 &         &        &    0.10   &  
309.61   &         &    \\ \hline
G\_150\_025     &  150 &  $\zESCJnoG$&  23.7185 & 23.4720 &    \_   &     \_ 
&    3.34   & 2049.99   &    \_   &    \_   \\
& 2835 & $\zCESCJnoG$&  23.7185 & 23.4720 &         &        &    2.81   & 
1461.60   &         &    \\ \hline
keller4         &  171 &  $\zESCJnoG$&  14.0122 & 13.4659 &    \_   &     \_ 
&   10.05   & 5386.19   &    \_   &    \_   \\
& 5100 & $\zCESCJnoG$&  14.0122 & 13.4659 &         &        &    8.85   & 
4367.68   &         &    \\ \hline
\end{tabular}
\end{small}
\end{center}
\end{table}

Computing the $k$-th level of the ESH and the CESH by including all ESCs of 
order $k$ is beyond reach rather soon, so in the next experiments we want to 
include the ESCs only for some subgraphs of a given order $k$. In order to 
determine the set $J$ of subgraphs for which to include the ESCs we follow the 
approach of 
Gaar and Rendl~\cite{GaarRendl,GaarRendlFull}. In particular, we start with 
$J=\emptyset$ and iteratively solve an SDP for computing the Lov\'{a}sz theta 
	function (either~\eqref{lovaszTheta_nPlus1} and~\eqref{lovaszTheta_n}) with 
	the already determined ESCs induced by $J$. Then we use the optimal 
	solution of the SDP in order to search for violated ESCs. To find 
	potentially violated subgraphs we perform a heuristic search among all 
	subgraphs that tries to minimize the inner product of the optimal solution 
	corresponding a subgraph and certain matrices (e.g., matrices that induce 
	facets of $\STAB^{2}(\GZeroi{k}))$. We refer  
	to~\cite{GaarRendl,GaarRendlFull} for more details.
We 
perform 10 iterations with including at most 200 ESCs of order~$k$ in each 
iteration, so in the end for each graph and for each $k$ we have a set~$J$ of 
at most 2000 ESCs. Of course it makes a difference whether we do the search 
starting from~\eqref{lovaszTheta_nPlus1} and~\eqref{lovaszTheta_n} as different 
subgraphs might be violated. 
We denote by $J_\mathcal{E}$ and $J_\mathcal{C}$ the set of subsets 
obtained by using~\eqref{lovaszTheta_nPlus1} and~\eqref{lovaszTheta_n} in order 
to search for violated subgraphs.
The used sets $J_\mathcal{E}$ and $J_\mathcal{C}$ are available 
online at \url{https://arxiv.org/src/2003.13605/anc}.
Table~\ref{tab:ttfSearchESCNrESC} summarizes the cardinalities of 
$J_\mathcal{E}$ 
and $J_\mathcal{C}$. 
The values of $\zESCJ$ and $\zCESCJ$ and the running time 
for the sets $J=J_\mathcal{E}$ can be found in Table~\ref{tab:ttfSearchESC} and 
Table~\ref{tab:ttfSearchESCTime}. 
The analogous computational results when considering $J=J_\mathcal{C}$ are 
presented in 
 Table~\ref{tab:ttfSearchCESC} and Table~\ref{tab:ttfSearchCESCTime}.

First, observe in Table~\ref{tab:ttfSearchESCNrESC} that the cardinality 
of 
$J_\mathcal{C}$ 
is typically larger 
compared to the cardinality of $J_\mathcal{E}$. 
This is plausible, because due to the 
additional row and column in~\eqref{lovaszTheta_nPlus1} and the SDP constraint 
in this formulation some ECSs might be satisfied, which are violated in the 
version with~\eqref{lovaszTheta_n}. 

When we turn to the values of $\zESCJ$ and $\zCESCJ$ in 
Table~\ref{tab:ttfSearchESC} and Table~\ref{tab:ttfSearchCESC} we observe 
for both $J=J_\mathcal{E}$ and $J=J_\mathcal{C}$ 
that (a) the 
larger $k$ becomes, the better the bounds are, 
(b) for $k=0$, so for computing~$\vartheta(G)$, we have 
$\zESCJ = \zCESCJ$ as expected, 
(c) for a fixed set $J$ we have $\zESCJ \leqslant \zCESCJ$ in accordance with 
the theory derived earlier and 
(d) typically the difference between  $\zESCJ$ and $\zCESCJ$ increases with 
increasing $k$.
This behavior is observable for both $J=J_\mathcal{E}$ and $J=J_\mathcal{C}$,  
hence the choice of the set $J$ has no significant influence on the 
behavior of the values of $\zESCJ$ and $\zCESCJ$.

However, we observe that usually the values of $\zESCJ$ for $J_\mathcal{E}$ are 
the best bounds, 
then $\zESCJ$ for $J_\mathcal{C}$ are the second best bounds,
$\zCESCJ$ for $J_\mathcal{C}$ are the third best bounds and 
$\zCESCJ$ for $J_\mathcal{E}$ yields the worst bounds - even if the differences 
are typically very small. This behavior is not surprising, because we know that 
for a fixed set $J$ we have $\zESCJ \leqslant \zCESCJ$ and it makes sense that 
the final bounds obtained are better when 
using the same formulation of $\vartheta(G)$ to obtain the bounds that was used 
to obtain $J$. 

Looking at the running times in Table~\ref{tab:ttfSearchESCTime} and 
Table~\ref{tab:ttfSearchCESCTime} we see that our expectations are not met: 
Even 
though the order of the matrix variable and the number of constraints of 
the 
SDP to compute $\zCESCJ$ are smaller than those to compute $\zESCJ$, the 
running 
times are typically larger. So apparently the highly sophisticated interior 
point solver 
MOSEK can deal better with $\zESCJ$. If we compare the running times for the 
set $J = J_\mathcal{E}$ and $J = J_\mathcal{C}$ we see that the running 
times for 
$J_\mathcal{E}$ typically are shorter,  but there are also instances 
(e.g., G\_100\_025 for $k=6$) where the computation of both $\zESCJnoG$ and 
$\zCESCJnoG$ is faster for $J=J_\mathcal{C}$ than it is for $J=J_\mathcal{E}$.

As a result, we confirm that tightening the Lov\'asz theta function 
towards the stability number with the help of ESCs
typically works better when
starting from the Lov\'asz theta function 
formulation~\eqref{lovaszTheta_nPlus1} (as it is done in the ESH) as it 
does when starting with the formulation~\eqref{lovaszTheta_n} (as it is done in 
the CESH), even 
though this is not obvious at 
first sight as the latter SDPs are smaller.
However, in some cases it can be advantageous to use the CESC, but 
then also 
the subset $J$ should be determined using~\eqref{lovaszTheta_n}.

\section{Conclusions}
\label{sec:conclusions}

In this paper we derived two new SDP hierarchies
from the Lov\'asz theta function
towards the stability 
number.
The classical ESH from the  
literature starts from the SDP~\eqref{lovaszTheta_nPlus1} and adds ESCs. We 
introduced the new CESH starting from~\eqref{lovaszTheta_n} and including ESCs. 
We proved that 
this new hierarchy has some same properties as the 
ESH.
Moreover, we showed that the bounds based on the ESH are at least as good 
as those from the CESH - not only for including all ESCs of a certain order, 
but also for including only some of them.

We also newly introduced SESCs, which are 
a more 
natural formulation of exactness for~\eqref{lovaszTheta_n}. 
Including them 
into~\eqref{lovaszTheta_n} 
yields the new SESH. Even though SESCs are more 
intuitive, the bounds based on the CESH and the SESH coincide.

In our computational results with an off-the-shelve interior point solver we 
typically obtain the best bounds with the fastest running times when using 
the ESH. However, for some instances using the CESH is beneficial.

It would be interesting to derive a specialized solver for the CESH as it was 
done by Gaar and Rendl~\cite{GaarRendl,GaarRendlFull} for the ESH. 
They dualize 
the ESCs, use the bundle method and instead of solving a huge SDP with all 
ESCs, they iterate 
and solve~\eqref{lovaszTheta_nPlus1} with a modified 
objective function in each iteration. Since~\eqref{lovaszTheta_n} has a smaller 
matrix order and fewer constraints, this approach  
presumably works even better for the CESH.
Such a solver allows to compare the running times for the ESH and the CESH 
in a more sophisticated way. 

Another open question is the more precise relationship of the ESH and 
the CESH. In this paper we have shown that $\zCESCHG{k} \geqslant \zESCHG{k}$ 
holds for all $k \in \{1, \dots, n\}$. It would be interesting to know if there 
is some constant $\ell \geqslant 1$ such that $\zESCHG{k} \geqslant 
\zCESCHG{k+\ell}$ holds for all graphs $G$ and for all $k \in \{1, \dots, n\}$, 
so 
such that it suffices to add $\ell$ levels to the CESH to reach the quality of 
the ESH.

Finally, it would be interesting to investigate which implications it has for 
the ESH and the CESH to induce the positive semidefiniteness constraint not for 
the whole matrix $X$, but only for a submatrix of $X$ like it has been done in 
the recent work~\cite{blekherman2022sparse}.

\bibliographystyle{amsplain}
\bibliography{papers}

\providecommand{\bysame}{\leavevmode\hbox to3em{\hrulefill}\thinspace}
\providecommand{\MR}{\relax\ifhmode\unskip\space\fi MR }
% \MRhref is called by the amsart/book/proc definition of \MR.
\providecommand{\MRhref}[2]{%
  \href{http://www.ams.org/mathscinet-getitem?mr=#1}{#2}
}
\providecommand{\href}[2]{#2}
\begin{thebibliography}{10}

\bibitem{AARW}
Elspeth Adams, Miguel~F. Anjos, Franz Rendl, and Angelika Wiegele, \emph{A
  {H}ierarchy of subgraph projection-based semidefinite relaxations for some
  {NP}-hard graph optimization problems}, INFOR Inf. Syst. Oper. Res.
  \textbf{53} (2015), no.~1, 40--47.

\bibitem{blekherman2022sparse}
Grigoriy Blekherman, Santanu~S Dey, Marco Molinaro, and Shengding Sun,
  \emph{Sparse psd approximation of the psd cone}, Math. Program. \textbf{191}
  (2022), 981--1004.

\bibitem{SurveyCliqueProblem}
Immanuel~M. Bomze, Marco Budinich, Panos~M. Pardalos, and Marcello Pelillo,
  \emph{The maximum clique problem}, Handbook of combinatorial optimization,
  {S}upplement {V}ol. {A}, Kluwer Acad. Publ., Dordrecht, 1999, pp.~1--74.

\bibitem{BoydVandenbergheConvexOptimization}
Stephen Boyd and Lieven Vandenberghe, \emph{Convex optimization}, Cambridge
  University Press, Cambridge, 2004.

\bibitem{HouseOfGraphs}
Gunnar Brinkmann, Kris Coolsaet, Jan Goedgebeur, and Hadrien Mélot,
  \emph{House of graphs: A database of interesting graphs}, Discrete Applied
  Mathematics \textbf{161} (2013), no.~1, 311 -- 314.

\bibitem{HPCutFacets}
Thomas Christof, \emph{{SMAPO: Cut Polytope}},
  \url{http://comopt.ifi.uni-heidelberg.de/software/SMAPO/cut/cut.html},
  Accessed: 2024-04-17.

\bibitem{PORTA}
Thomas Christof and Andreas L\"{o}bel, \emph{{PORTA: Polyhedron representation
  transformation algorithm}},
  \url{http://comopt.ifi.uni-heidelberg.de/software/PORTA/}, 1997, Accessed:
  2024-04-17.

\bibitem{ConfortiCornuejolsZambelliIntegerProgramming}
Michele Conforti, Gerard Cornuejols, and Giacomo Zambelli, \emph{Integer
  programming}, Springer Publishing Company, Incorporated, 2014.

\bibitem{DezaLaurentBQP}
Michel~Marie Deza and Monique Laurent, \emph{Geometry of cuts and metrics},
  Algorithms and Combinatorics, vol.~15, Springer-Verlag, Berlin, 1997.

\bibitem{DIMACS1992}
{DIMACS Implementation Challenges},
  \url{http://dimacs.rutgers.edu/Challenges/}, 1992, {Accessed: 2024-04-17}.

\bibitem{DukanovicRendl}
Igor Dukanovic and Franz Rendl, \emph{Semidefinite programming relaxations for
  graph coloring and maximal clique problems}, Math. Program. \textbf{109}
  (2007), no.~2-3, Ser. B, 345--365.

\bibitem{FischerGruberRendlSotirovBundleMaxCutEquipartition}
Ilse Fischer, Gerald Gruber, Franz Rendl, and Renata Sotirov,
  \emph{Computational experience with a bundle approach for semidefinite
  cutting plane relaxations of {M}ax-{C}ut and equipartition}, Math. Program.
  \textbf{105} (2006), no.~2-3, Ser. B, 451--469.

\bibitem{elli-diss}
Elisabeth Gaar, \emph{Efficient implementation of {SDP} relaxations for the
  stable set problem}, Ph.D. thesis, Alpen-Adria-Universit\"at Klagenfurt,
  2018.

\bibitem{GaarRendl}
Elisabeth Gaar and Franz Rendl, \emph{A bundle approach for {SDP}s with exact
  subgraph constraints}, Integer Programming and Combinatorial Optimization
  (Andrea Lodi and Viswanath Nagarajan, eds.), Springer International
  Publishing, 2019, pp.~205--218.

\bibitem{GaarRendlFull}
\bysame, \emph{A computational study of exact subgraph based {SDP} bounds for
  {M}ax-{C}ut, stable set and coloring}, Math. Program. \textbf{183} (2020),
  283--308.

\bibitem{GalliLetchford}
Laura Galli and Adam~N. Letchford, \emph{On the {L}ov\'{a}sz theta function and
  some variants}, Discrete Optim. \textbf{25} (2017), 159--174.

\bibitem{GoemansWilliamson}
Michel~X. Goemans and David~P. Williamson, \emph{Improved approximation
  algorithms for maximum cut and satisfiability problems using semidefinite
  programming}, J. Assoc. Comput. Mach. \textbf{42} (1995), no.~6, 1115--1145.

\bibitem{OurUsedFormOfLovasTheta}
Martin Gr\"otschel, L\'aszl\'o Lov\'asz, and Alexander Schrijver,
  \emph{Geometric algorithms and combinatorial optimization}, Algorithms and
  Combinatorics: Study and Research Texts, vol.~2, Springer-Verlag, Berlin,
  1988.

\bibitem{GruberRendl}
Gerald Gruber and Franz Rendl, \emph{Computational experience with stable set
  relaxations}, SIAM J. Optim. \textbf{13} (2003), no.~4, 1014--1028.

\bibitem{GvozdenovicLaurentVallentinHierarchy}
Neboj{\v s}a Gvozdenovi\'c, Monique Laurent, and Frank Vallentin,
  \emph{Block-diagonal semidefinite programming hierarchies for 0/1
  programming}, Oper. Res. Lett. \textbf{37} (2009), no.~1, 27--31.

\bibitem{HelmbergSdpBible}
Christoph Helmberg, \emph{Semidefinite programming for combinatorial
  optimization}, Habilitation, Konrad-Zuse-Zentrum für Informationstechnik
  Berlin, 2000.

\bibitem{LasserreHierarchy}
Jean~B. Lasserre, \emph{An explicit exact {SDP} relaxation for nonlinear 0-1
  programs}, Integer programming and combinatorial optimization ({U}trecht,
  2001), Lecture Notes in Comput. Sci., vol. 2081, Springer, Berlin, 2001,
  pp.~293--303.

\bibitem{LaurentComparisonHierarchies}
Monique Laurent, \emph{A comparison of the {S}herali-{A}dams,
  {L}ov\'asz-{S}chrijver, and {L}asserre relaxations for 0-1 programming},
  Math. Oper. Res. \textbf{28} (2003), no.~3, 470--496.

\bibitem{LovaszStart}
L{\'a}szl{\'o} Lov{\'a}sz, \emph{On the {S}hannon capacity of a graph}, IEEE
  Transactions on Information Theory \textbf{25} (1979), no.~1, 1--7.

\bibitem{LovaszSchrijverHierarchy}
L{\'a}szl{\'o} Lov\'asz and Alexander Schrijver, \emph{Cones of matrices and
  set-functions and {$0$}-{$1$} optimization}, SIAM J. Optim. \textbf{1}
  (1991), no.~2, 166--190.

\bibitem{mosek}
{MOSEK ApS}, \emph{The {MOSEK} optimization toolbox for {MATLAB} manual.
  {V}ersion 8.0.}, 2017.

\bibitem{PadbergBQP}
Manfred Padberg, \emph{The {B}oolean quadric polytope: {S}ome characteristics,
  facets and relatives}, Math. Program. \textbf{45} (1989), no.~1, (Ser. B),
  139--172.

\bibitem{SchrijverAddNonNeg}
Alexander Schrijver, \emph{A comparison of the {D}elsarte and {L}ov\'{a}sz
  bounds}, IEEE Trans. Inform. Theory \textbf{25} (1979), no.~4, 425--429.

\bibitem{SheraliAdamsHierarchy}
Hanif~D. Sherali and Warren~P. Adams, \emph{A hierarchy of relaxations between
  the continuous and convex hull representations for zero-one programming
  problems}, SIAM J. Discrete Math. \textbf{3} (1990), no.~3, 411--430.

\bibitem{SilvestriThesis}
Francesco Silvestri, \emph{{Spectrahedral Relaxations of the Stable Set
  Polytope using induced Subgraphs}}, Master's thesis, Universität Heidelberg,
  2013.

\bibitem{YildirimFan}
E.~Alper Yildirim and Xiaofei Fan-Orzechowski, \emph{On extracting maximum
  stable sets in perfect graphs using {L}ov\'{a}sz's theta function}, Comput.
  Optim. Appl. \textbf{33} (2006), no.~2-3, 229--247.

\end{thebibliography}

\begin{table}[hb]
\caption{The number of included ESCs $|J_\mathcal{E}|$ and $|J_\mathcal{C}|$ 
for $J=J_\mathcal{E}$ and $J=J_\mathcal{C}$ for the computations 
of 
Table~\ref{tab:ttfSearchESC}
and 
Table~\ref{tab:ttfSearchCESC}
, respectively
}    
\label{tab:ttfSearchESCNrESC}   
\begin{center}
\begin{small}
\setlength{\tabcolsep}{4pt}
\begin{tabular}{|l|r|r|rrrrrr|}
\hline
Name& $n$ & & \multicolumn{6}{c|}{$|J|$ for subgraphs of order $k$}\\
& $m$ &  & $k=0$ & $k=2$ & $k=3$ 
& $k=4$ 
& $k=5$ & $k=6$ \\    
\hline 
HoG\_34272      &     9 &$|J_\mathcal{E}|$& 0 &    9 &   57 &  116 &  126 &   
84\\
&    17 &$|J_\mathcal{C}|$& 0 &    8 &   56 &  110 &  120 &   84 \\ \hline
HoG\_15599      &    20 &$|J_\mathcal{E}|$& 0 &    0 &  141 & 1428 & 2000 & 
2000\\
&    44 &$|J_\mathcal{C}|$& 0 &    0 &  138 & 1240 & 1977 & 2000 \\ \hline
CubicVT26\_5    &    26 &$|J_\mathcal{E}|$& 0 &    0 &  515 & 1189 & 1824 & 
2000\\
&    39 &$|J_\mathcal{C}|$& 0 &    0 &  458 & 1761 & 2000 & 1515 \\ \hline
HoG\_34274      &    36 &$|J_\mathcal{E}|$& 0 &   25 &  823 & 1700 & 2000 & 
2000\\
&    72 &$|J_\mathcal{C}|$& 0 &   24 &  704 & 1593 & 1930 & 2000 \\ \hline
HoG\_6575       &    45 &$|J_\mathcal{E}|$& 0 &    0 &  260 & 1025 & 1378 & 
1785\\
&   225 &$|J_\mathcal{C}|$& 0 &    0 &  268 & 1439 & 1563 & 1490 \\ \hline
MANN\_a9        &    45 &$|J_\mathcal{E}|$& 0 &    0 &  718 & 1102 & 1449 & 
2000\\
&    72 &$|J_\mathcal{C}|$& 0 &    0 &  734 & 1750 & 1950 & 2000 \\ \hline
Circulant47\_30 &    47 &$|J_\mathcal{E}|$& 0 &    0 &  827 & 1337 & 1635 & 
2000\\
&   282 &$|J_\mathcal{C}|$& 0 &    0 &  761 & 1276 & 1796 & 2000 \\ \hline
G\_50\_025      &    50 &$|J_\mathcal{E}|$& 0 &   82 &  413 &  707 & 1146 & 
2000\\
&   308 &$|J_\mathcal{C}|$& 0 &   88 &  521 &  928 & 1645 & 2000 \\ \hline
G\_60\_025      &    60 &$|J_\mathcal{E}|$& 0 &   93 &  492 &  901 & 1366 & 
2000\\
&   450 &$|J_\mathcal{C}|$& 0 &   96 &  486 & 1233 & 1665 & 2000 \\ \hline
Paley61         &    61 &$|J_\mathcal{E}|$& 0 &    0 &    0 &    0 &    0 &   
48\\
&   915 &$|J_\mathcal{C}|$& 0 &    0 &    0 &    0 &    0 &   37 \\ \hline
hamming6\_4     &    64 &$|J_\mathcal{E}|$& 0 &  247 & 1665 & 2000 & 2000 & 
2000\\
&  1312 &$|J_\mathcal{C}|$& 0 &  251 & 1579 & 1970 & 1955 & 2000 \\  \hline
HoG\_34276      &    72 &$|J_\mathcal{E}|$& 0 &   49 & 1402 & 1415 & 1873 & 
2000\\
&   144 &$|J_\mathcal{C}|$& 0 &   76 &  602 & 1398 & 1916 & 2000 \\ \hline
G\_80\_050      &    80 &$|J_\mathcal{E}|$& 0 &  158 &  704 & 1132 & 1854 & 
2000\\
&  1620 &$|J_\mathcal{C}|$& 0 &  220 & 1391 & 1766 & 2000 & 2000 \\ \hline
G\_100\_025     &   100 &$|J_\mathcal{E}|$& 0 &  228 &  590 &  901 & 1658 & 
2000\\
&  1243 &$|J_\mathcal{C}|$& 0 &  235 & 1197 & 1630 & 1961 & 2000 \\ \hline
spin5           &   125 &$|J_\mathcal{E}|$& 0 &    0 & 1204 & 1975 & 2000 & 
2000\\
&   375 &$|J_\mathcal{C}|$& 0 &    0 &  982 & 1829 & 2000 & 2000 \\ \hline
G\_150\_025     &   150 &$|J_\mathcal{E}|$& 0 &  275 &  496 &  804 & 1759 & 
2000\\
&  2835 &$|J_\mathcal{C}|$& 0 &  338 &  718 & 1474 & 1969 & 2000 \\ \hline
keller4         &   171 &$|J_\mathcal{E}|$& 0 &  482 & 1332 & 1959 & 2000 & 
2000\\
&  5100 &$|J_\mathcal{C}|$& 0 &  457 & 1630 & 1931 & 2000 & 2000 \\ \hline
G\_200\_025     &   200 &$|J_\mathcal{E}|$& 0 &  307 &  688 &  884 & 1498 & 
2000\\
&  4905 &$|J_\mathcal{C}|$& 0 &  345 &  812 & 1398 & 2000 & 2000 \\ \hline
brock200\_1     &   200 &$|J_\mathcal{E}|$& 0 &  325 &  571 &  849 & 1406 & 
2000\\
&  5066 &$|J_\mathcal{C}|$& 0 &  365 &  673 & 1395 & 1958 & 2000 \\ \hline
c\_fat200\_5    &   200 &$|J_\mathcal{E}|$& 0 & 1860 & 1913 & 2000 & 2000 & 
2000\\
& 11427 &$|J_\mathcal{C}|$& 0 & 1827 & 1999 & 2000 & 2000 & 2000 \\ \hline
sanr200\_0\_9   &   200 &$|J_\mathcal{E}|$& 0 &  267 &  530 &  844 & 1483 & 
2000\\
&  2037 &$|J_\mathcal{C}|$& 0 &  337 &  636 & 1252 & 2000 & 2000 \\ \hline
\end{tabular}
\end{small}
\end{center}
\end{table}

\begin{table}[ht]
\caption{The values of $\zESCJ$ and $\zCESCJ$ for 
different graphs $G$ and sets $J = J_\mathcal{E}$ 
for subgraphs of order $k$ for $k \in \{0, 2, 3, 4, 5, 6\}$}
\label{tab:ttfSearchESC}    
\begin{center}
    \begin{small}
    %\hspace*{-0.5cm}
    \setlength{\tabcolsep}{3pt}
\begin{tabular}{|l|r|r|rrrrrr|}
    \hline
    Name & $n$ & & \multicolumn{6}{c|}{$J = J_\mathcal{E}$ for subgraphs of 
    order 
    $k$}\\
     &  $m$ &  & $k=0$ & $k=2$ & $k=3$ 
    & $k=4$ 
    & $k=5$ & $k=6$ \\    
    \hline 
    HoG\_34272  &   9  & 
            $\zESCJnoG$&     3.3380   
    &     3.2729   
    &     
    3.0605   &     3.0000   &     3.0000   &     3.0000   \\
    &  17           &    $\zCESCJnoG$&     3.3380   &     3.2763   &     
    3.1864   
    &     3.0000   &     3.0000   &     3.0000   \\ \hline 
                       HoG\_15599  &   20  & 
             $\zESCJnoG$&     7.8202   &     7.8202   
    &     
    7.4437   &     7.0000   &     7.0000   &     7.0000   \\
    &  44         &    $\zCESCJnoG$&     7.8202   &     7.8202   &     
    7.4771   
    &     7.4667   &     7.3458   &     7.0000   \\ \hline 
                     CubicVT26\_5  &   26  & 
             $\zESCJnoG$&    11.8171   &    11.8171   
    &    
    10.9961   &    10.7210   &    10.4214   &    10.3357   \\
    &   39          &    $\zCESCJnoG$&    11.8171   &    11.8171   &    
    11.0037   
    &    11.0035   &    10.7778   &    10.6519   \\ \hline 
                      HoG\_34274  &   36  & 
          $\zESCJnoG$&    13.2317   &    13.0915   
    &    
    12.3174   &    12.0525   &    12.0000   &    12.0000   \\
    &   72        &    $\zCESCJnoG$&    13.2317   &    13.1052   &    
    12.7491   
    &    12.2346   &    12.0000   &    12.0000   \\ \hline 
                        HoG\_6575  &   45  & 
           $\zESCJnoG$&    15.0530   &    15.0530   
    &    
    14.3178   &    14.0257   &    13.8179   &    12.7257   \\
    &  225        &    $\zCESCJnoG$&    15.0530   &    15.0530   &    
    14.3178   
    &    14.1817   &    14.1489   &    13.4104   \\ \hline 
                    MANN\_a9  &   45  & 
           $\zESCJnoG$&    17.4750   &    17.4750   
    &    
    17.1203   &    17.0727   &    16.9964   &    16.8635   \\
    &    72         &    $\zCESCJnoG$&    17.4750   &    17.4750   &    
    17.1644   
    &    17.1163   &    17.0654   &    17.0342   \\ \hline 
                 Circulant47\_30  &   47  & 
            $\zESCJnoG$&    14.3022   &    14.3022   
    &    
    13.6103   &    13.1817   &    13.1806   &    13.0734   \\
    &  282     &    $\zCESCJnoG$&    14.3022   &    14.3022   &    
    13.6172   
    &    13.2008   &    13.1943   &    13.0907   \\ \hline 
                      G\_50\_025  &   50  & 
           $\zESCJnoG$&    13.5642   &    13.4554   
    &    
    13.1310   &    12.9420   &    12.7749   &    12.6210   \\
    &  308        &    $\zCESCJnoG$&    13.5642   &    13.4555   &    
    13.2743   
    &    13.1118   &    12.9279   &    12.7287   \\ \hline 
     G\_60\_025  &   60  & 
         $\zESCJnoG$&    14.2815   &    14.2013   
    &    
    14.0450   &    13.8738   &    13.6876   &    13.6702   \\
    &   450        &    $\zCESCJnoG$&    14.2815   &    14.2013   &    
    14.1038   
    &    13.9800   &    13.7834   &    13.7648   \\ \hline 
                     Paley61  &   61  & 
          $\zESCJnoG$&     7.8102   &     7.8102   
    &     
    7.8102   &     7.8102   &     7.8102   &     7.7480   \\
    &     915        &    $\zCESCJnoG$&     7.8102   &     7.8102   &     
    7.8102   
    &     7.8102   &     7.8102   &     7.7720   \\ \hline 
                      hamming6\_4  &   64  & 
          $\zESCJnoG$&     5.3333   &     4.0000   
    &     
    4.0000   &     4.0000   &     4.0000   &     4.0000   \\
    &    1312          &    $\zCESCJnoG$&     5.3333   &     4.0000   &     
    4.0000   
    &     4.0000   &     4.0000   &     4.0000   \\ \hline 
                      HoG\_34276  &   72  & 
           $\zESCJnoG$&    26.4635   &    26.1831   
    &    
    25.5429   &    24.8186   &    24.1331   &    24.1348   \\
    &    144        &    $\zCESCJnoG$&    26.4635   &    26.2105   &    
    25.9856   
    &    25.6362   &    24.8086   &    24.7730   \\ \hline 
                  G\_80\_050  &   80  & 
         $\zESCJnoG$&     9.4353   &     9.3812   
    &     
    9.3775   &     9.3521   &     9.3152   &     9.2633   \\
    &    1620       &    $\zCESCJnoG$&     9.4353   &     9.3812   &     
    9.3790   
    &     9.3626   &     9.3364   &     9.2949   \\ \hline 
                     G\_100\_025  &  100  & 
          $\zESCJnoG$&    19.4408   &    19.2866   
    &    
    19.2606   &    19.2302   &    19.1807   &    19.1196   \\
    &   1243     &    $\zCESCJnoG$&    19.4408   &    19.2866   &    
    19.2692   
    &    19.2516   &    19.2153   &    19.1630   \\ \hline 
                            spin5  &  125  & 
         $\zESCJnoG$&    55.9017   &    55.9017   
    &    
    50.4661   &    50.1027   &    50.0000   &    50.0000   \\
    &   375        &    $\zCESCJnoG$&    55.9017   &    55.9017   &    
    51.8181   
    &    50.6352   &    50.0000   &    50.0081   \\ \hline 
                     G\_150\_025  &  150  & 
          $\zESCJnoG$&    23.7185   &    23.5355   
    &    
    23.4744   &    23.4693   &    23.4637   &    23.4555   \\
    &   2835      &    $\zCESCJnoG$&    23.7185   &    23.5355   &    
    23.4753   
    &    23.4704   &    23.4663   &    23.4602   \\ \hline 
                          keller4  &  171  & 
         $\zESCJnoG$&    14.0122   &    13.7260   
    &    
    13.5252   &    13.4909   &    13.4786   &    13.4801   \\
    &    5100       &    $\zCESCJnoG$&    14.0122   &    13.7261   &    
    13.5253   
    &    13.4909   &    13.4786   &    13.4811   \\ \hline 
                     G\_200\_025  &  200  & 
         $\zESCJnoG$&    28.2165   &    28.0436   
    &    
    27.9630   &    27.9427   &    27.9326   &    27.9345   \\
    &    4905        &    $\zCESCJnoG$&    28.2165   &    28.0436   &    
    27.9630   
    &    27.9427   &    27.9333   &    27.9354   \\ \hline 
                      brock200\_1  &  200  & 
          $\zESCJnoG$&    27.4566   &    27.2969   
    &    
    27.2250   &    27.2036   &    27.1949   &    27.1925   \\
    &   5066        &    $\zCESCJnoG$&    27.4566   &    27.2969   &    
    27.2250   
    &    27.2040   &    27.1955   &    27.1937   \\ \hline 
                     c\_fat200\_5  &  200  & 
        $\zESCJnoG$&    60.3453   &    60.3453   
    &    
    58.0000   &    58.0000   &    58.0000   &    58.0000   \\
    &  11427       &    $\zCESCJnoG$&    60.3453   &    60.3453   &    
    58.0142   
    &    58.0000   &    58.0000   &    58.0000   \\ \hline 
                    sanr200\_0\_9  &  200  & 
         $\zESCJnoG$&    49.2735   &    49.0388   
    &    
    48.9195   &    48.8546   &    48.7465   &    48.7206   \\
    &    2037      &    $\zCESCJnoG$&    49.2735   &    49.0388   &    
    48.9312   
    &    48.8811   &    48.8137   &    48.8035   \\ \hline 
\end{tabular}
\end{small}
\end{center}
\end{table}

\begin{table}[ht]
    \caption{The running times for the results of Table~\ref{tab:ttfSearchESC}}
    \label{tab:ttfSearchESCTime}   
\begin{center}
    \begin{small}
    %\hspace*{-1cm}
    \setlength{\tabcolsep}{4pt}
\begin{tabular}{|l|r|r|rrrrrr|}
    \hline
    Name & $n$ & & \multicolumn{6}{c|}{$J = J_\mathcal{E}$ for subgraphs of 
    order 
    $k$}\\    
     &  $m$ &  & $k=0$ & $k=2$ & $k=3$ 
    & $k=4$ 
    & $k=5$ & $k=6$ \\
    \hline 
                    HoG\_34272  &    9  & 
            $\zESCJnoG$&    0.26   &    0.07   &    
    0.08   
    &    0.18   &    0.36   &    0.38   \\
    &  17         &    $\zCESCJnoG$&    0.04   &    0.04   &    0.08   &    
    0.15   &    0.29   &    0.36   \\ \hline 
                       HoG\_15599  &   20  & 
           $\zESCJnoG$&    0.04   &    0.04   &    
    0.21   
    &   95.64   &  869.93   & 1624.17   \\
    &    44         &    $\zCESCJnoG$&    0.04   &    0.03   &    0.20   &  
    123.77   & 1005.30   & 2413.69   \\ \hline 
                     CubicVT26\_5  &   26  & 
          $\zESCJnoG$&    0.04   &    0.03   &    
    2.18   
    &   73.75   &  727.67   & 2307.93   \\
    &    39         &    $\zCESCJnoG$&    0.04   &    0.03   &    1.88   &   
    61.06   &  822.95   & 2980.21   \\ \hline 
                      HoG\_34274  &   36  & 
            $\zESCJnoG$&    0.04   &    0.06   &   
    12.79   
    &  349.74   & 1035.69   & 2205.90   \\
    &    72       &    $\zCESCJnoG$&    0.04   &    0.06   &   11.20   &  
    284.22   &  977.60   & 2773.10   \\ \hline 
                        HoG\_6575  &   45  & 
           $\zESCJnoG$&    0.05   &    0.05   &    
    0.78   
    &   43.73   &  243.83   & 1163.78   \\
    &   225        &    $\zCESCJnoG$&    0.05   &    0.04   &    0.77   &   
    45.97   &  275.98   & 1139.69   \\ \hline 
                    MANN\_a9  &   45  & 
           $\zESCJnoG$&    0.04   &    0.05   &    
    7.09   
    &   76.40   &  465.01   & 2891.53   \\
    &    72        &    $\zCESCJnoG$&    0.05   &    0.04   &    6.39   &   
    76.05   &  494.98   & 3391.76   \\ \hline 
                 Circulant47\_30  &   47  & 
          $\zESCJnoG$&    0.07   &    0.06   &   
    10.56   
    &  116.63   &  490.99   & 2181.69   \\
    &   282        &    $\zCESCJnoG$&    0.07   &    0.06   &   11.18   &  
    109.49   &  430.87   & 2321.85   \\ \hline 
                      G\_50\_025  &   50  & 
          $\zESCJnoG$&    0.08   &    0.18   &    
    2.43   
    &   23.40   &  200.74   & 2377.75   \\
    &   308         &    $\zCESCJnoG$&    0.08   &    0.18   &    2.49   &   
    22.07   &  209.83   & 2606.79   \\ \hline 
     G\_60\_025 &   60  & 
          $\zESCJnoG$&    0.13   &    0.29   &    
    4.36   
    &   43.88   &  326.96   & 2495.65   \\
    &   450        &    $\zCESCJnoG$&    0.12   &    0.27   &    4.19   &   
    52.47   &  380.22   & 2323.79   \\ \hline 
                     Paley61  &   61  & 
           $\zESCJnoG$&    0.23   &    0.22   &    
    0.22   
    &    0.23   &    0.22   &    0.67   \\
    &     915       &    $\zCESCJnoG$&    0.17   &    0.16   &    0.17   &    
    0.18   &    0.16   &    0.56   \\ \hline 
                      hamming6\_4  &   64  & 
         $\zESCJnoG$&    0.36   &    1.36   &   
    45.30   
    &  133.11   &  318.78   &  802.02   \\
    &    1312        &    $\zCESCJnoG$&    0.29   &    1.05   &   29.49   &  
    113.16   &  366.63   &  804.42   \\ \hline 
                      HoG\_34276  &   72  & 
           $\zESCJnoG$&    0.07   &    0.12   &   
    63.26   
    &  218.38   & 1845.60   & 5152.23   \\
    &    144     &    $\zCESCJnoG$&    0.07   &    0.12   &   60.17   &  
    227.44   & 1978.86   & 6327.47   \\ \hline 
                  G\_80\_050  &   80  & 
           $\zESCJnoG$&    0.89   &    1.90   &   
    12.99   
    &   87.98   &  713.59   & 1391.32   \\
    &     1620     &    $\zCESCJnoG$&    0.92   &    1.94   &   14.02   &   
    96.16   &  624.19   & 1497.57   \\ \hline 
                     G\_100\_025  &  100  & 
         $\zESCJnoG$&    0.64   &    1.77   &   
    10.03   
    &   69.21   &  691.72   & 2768.81   \\
    &      1243      &    $\zCESCJnoG$&    0.56   &    1.69   &   10.67   &   
    68.02   &  690.08   & 2709.48   \\ \hline 
                            spin5  &  125  & 
          $\zESCJnoG$&    0.16   &    0.17   &   
    37.22   
    &  342.58   &  947.04   & 1972.30   \\
    &   375         &    $\zCESCJnoG$&    0.10   &    0.10   &   31.63   &  
    296.17   & 1110.95   & 3237.65   \\ \hline 
                     G\_150\_025  &  150  & 
        $\zESCJnoG$&    3.27   &    6.70   &   
    19.19   
    &   91.37   &  957.42   & 2867.07   \\
    &  2835      &    $\zCESCJnoG$&    3.05   &    6.50   &   16.98   &   
    77.65   & 1041.79   & 3101.88   \\ \hline 
                          keller4  &  171  & 
        $\zESCJnoG$&   11.36   &   24.40   &  
    178.18   
    &  749.89   & 1644.06   & 4040.91   \\
    &     5100        &    $\zCESCJnoG$&    9.51   &   24.76   &  150.29   &  
    737.29   & 2023.90   & 4937.69   \\ \hline 
                     G\_200\_025  &  200  & 
        $\zESCJnoG$&   10.04   &   18.35   &   
    54.11   
    &  141.27   &  853.42   & 3252.01   \\
    &     4905         &    $\zCESCJnoG$&   10.05   &   18.56   &   57.54   &  
    143.34   &  972.45   & 4075.58   \\ \hline 
                      brock200\_1  &  200  & 
         $\zESCJnoG$&   11.81   &   20.90   &   
    44.77   
    &  151.79   &  782.03   & 3350.39   \\
    &    5066       &    $\zCESCJnoG$&   10.78   &   20.04   &   43.04   &  
    144.64   &  855.91   & 3929.63   \\ \hline 
                     c\_fat200\_5  &  200  & 
         $\zESCJnoG$&   49.31   &  177.02   &  
    563.46   
    &  755.78   & 3140.87   & 3714.74   \\
    &    11427       &    $\zCESCJnoG$&   37.30   &  156.36   &  653.76   &  
    767.95   & 3101.51   & 3333.48   \\ \hline 
                    sanr200\_0\_9  &  200  & 
       $\zESCJnoG$&    2.17   &    4.19   &   
    17.45   
    &   86.09   &  867.56   & 4353.92   \\
    &     2037     &    $\zCESCJnoG$&    1.76   &    4.43   &   16.98   &   
    81.76   &  881.66   & 4631.88   \\ \hline 
\end{tabular}
\end{small}
\end{center}
\end{table}

\begin{table}[ht]
\caption{The values of $\zESCJ$ and $\zCESCJ$ for 
    different graphs $G$ and sets $J = J_\mathcal{C}$ 
    for subgraphs of order $k$ for $k \in \{0, 2, 3, 4, 5, 6\}$}
    \label{tab:ttfSearchCESC}    
    \begin{center}
        \begin{small}
        %\hspace*{-1.5cm}
        \setlength{\tabcolsep}{3pt}
        \begin{tabular}{|l|r|r|rrrrrr|}
            \hline
               Name & $n$ & & \multicolumn{6}{c|}{$J = J_\mathcal{C}$ for 
               subgraphs 
                of 
                order $k$}\\
             &$m$ &  & $k=0$ & $k=2$ & $k=3$ 
            & $k=4$ 
            & $k=5$ & $k=6$ \\    
            \hline 
                            HoG\_34272  & 
               9  & 
                   $\zESCJnoG$&     3.3380   &     
            3.2729   &     
            3.0605   &     3.0000   &     3.0000   &     3.0000   \\
            &    17         &    $\zCESCJnoG$&     3.3380   &     3.2763   
            &     
            3.1765   
            &     3.0000   &     3.0000   &     3.0000   \\ \hline 
                               HoG\_15599  &   
            20  & 
                    $\zESCJnoG$&     7.8202   &     
            7.8202   &     
            7.4472   &     7.3871   &     7.0000   &     7.0000   \\
            &   44        &    $\zCESCJnoG$&     7.8202   &     7.8202   &     
            7.4761   
            &     7.4291   &     7.2500   &     7.0000   \\ \hline 
                             CubicVT26\_5  &   
            26  & 
                  $\zESCJnoG$&    11.8171   &    
            11.8171   &    
            11.0035   &    10.9932   &    10.5956   &    10.3201   \\
            &     39       &    $\zCESCJnoG$&    11.8171   &    11.8171   &    
            11.0035   
            &    11.0035   &    10.7189   &    10.5727   \\ \hline 
                              HoG\_34274  &   
            36  & 
                   $\zESCJnoG$&    13.2317   &    
            13.0915   &    
            12.3066   &    12.1338   &    12.0000   &    12.0000   \\
            &    72        &    $\zCESCJnoG$&    13.2317   &    13.1052   &    
            12.6217   
            &    12.3998   &    12.0000   &    12.0000   \\ \hline 
                                HoG\_6575  &   
            45  & 
                 $\zESCJnoG$&    15.0530   &    
            15.0530   &    
            14.3178   &    14.1594   &    14.0495   &    12.9931   \\
            &    225       &    $\zCESCJnoG$&    15.0530   &    15.0530   &    
            14.3178   
            &    14.1595   &    14.0791   &    13.2063   \\ \hline 
                            MANN\_a9  &   45  
            & 
                   $\zESCJnoG$&    17.4750   &    
            17.4750   &    
            17.1332   &    17.0762   &    17.0009   &    16.9167   \\
            &     72      &    $\zCESCJnoG$&    17.4750   &    17.4750   &    
            17.1471   
            &    17.1092   &    17.0591   &    17.0349   \\ \hline 
                         Circulant47\_30  &   
            47  & 
                 $\zESCJnoG$&    14.3022   &    
            14.3022   &    
            13.6188   &    13.1845   &    13.1827   &    13.0516   \\
            &    282        &    $\zCESCJnoG$&    14.3022   &    14.3022   &    
            13.6233   
            &    13.1934   &    13.1887   &    13.0598   \\ \hline 
                              G\_50\_025  &   
            50  & 
                 $\zESCJnoG$&    13.5642   &    
            13.4554   &    
            13.1225   &    12.9735   &    12.7355   &    12.6194   \\
            &   308      &    $\zCESCJnoG$&    13.5642   &    13.4555   &    
            13.2253   
            &    13.0803   &    12.8240   &    12.7127   \\ \hline 
             G\_60\_025  &   60  & 
                  $\zESCJnoG$&    14.2815   &    
            14.2013   &    
            14.0466   &    13.9115   &    13.7271   &    13.6885   \\
            &     450       &    $\zCESCJnoG$&    14.2815   &    14.2013   &    
            14.1014   
            &    13.9727   &    13.7845   &    13.7478   \\ \hline 
                             Paley61  &   
            61  & 
                  $\zESCJnoG$&     7.8102   &     
            7.8102   &     
            7.8102   &     7.8102   &     7.8102   &     7.7803   \\
            &   915      &    $\zCESCJnoG$&     7.8102   &     7.8102   &     
            7.8102   
            &     7.8102   &     7.8102   &     7.7945   \\ \hline 
                              hamming6\_4  &   
            64  & 
               $\zESCJnoG$&     5.3333   &     
            4.0000   &     
            4.0000   &     4.0000   &     4.0000   &     4.0000   \\
            &    1312       &    $\zCESCJnoG$&     5.3333   &     4.0000   
            &     
            4.0000   
            &     4.0000   &     4.0000   &     4.0000   \\ \hline 
                              HoG\_34276  &   
            72  & 
                 $\zESCJnoG$&    26.4635   &    
            26.1831   &    
            25.7450   &    24.9159   &    24.1589   &    24.0977   \\
            &   144       &    $\zCESCJnoG$&    26.4635   &    26.2105   &    
            26.0414   
            &    25.5734   &    24.7075   &    24.4902   \\ \hline 
                          G\_80\_050  &   80  
            & 
                $\zESCJnoG$&     9.4353   &     
            9.3814   &     
            9.3741   &     9.3566   &     9.3182   &     9.2835   \\
            &     1620       &    $\zCESCJnoG$&     9.4353   &     9.3814   
            &     
            9.3777   
            &     9.3658   &     9.3367   &     9.3114   \\ \hline 
                             G\_100\_025  &  
            100  & 
               $\zESCJnoG$&    19.4408   &    
            19.2892   &    
            19.2639   &    19.2255   &    19.1638   &    19.1260   \\
            &    1243         &    $\zCESCJnoG$&    19.4408   &    19.2892   
            &    
            19.2716   
            &    19.2498   &    19.2015   &    19.1730   \\ \hline 
                                    spin5  &  
            125  & 
                $\zESCJnoG$&    55.9017   &    
            55.9017   &    
            50.6697   &    50.2870   &    50.0000   &    50.0000   \\
            &    375      &    $\zCESCJnoG$&    55.9017   &    55.9017   &    
            51.2339   
            &    50.3559   &    50.0000   &    50.0000   \\ \hline 
                             G\_150\_025  &  
            150  & 
               $\zESCJnoG$&    23.7185   &    
            23.5122   &    
            23.4752   &    23.4676   &    23.4641   &    23.4599   \\
            &    2835      &    $\zCESCJnoG$&    23.7185   &    23.5122   &    
            23.4754   
            &    23.4691   &    23.4667   &    23.4634   \\ \hline 
                                  keller4  &  
            171  & 
               $\zESCJnoG$&    14.0122   &    
            13.6845   &    
            13.5526   &    13.4896   &    13.4792   &    13.4823   \\
            &      5100      &    $\zCESCJnoG$&    14.0122   &    13.6846   
            &    
            13.5526   
            &    13.4896   &    13.4792   &    13.4826   \\ \hline 
                             G\_200\_025  &  
            200  & 
                $\zESCJnoG$&    28.2165   &    
            28.0139   &    
            27.9675   &    27.9447   &    27.9342   &    27.9313   \\
            &   4905      &    $\zCESCJnoG$&    28.2165   &    28.0139   &    
            27.9675   
            &    27.9449   &    27.9345   &    27.9326   \\ \hline 
                              brock200\_1  &  
            200  & 
               $\zESCJnoG$&    27.4566   &    
            27.2911   &    
            27.2212   &    27.2007   &    27.1950   &    27.1928   \\
            &   5066        &    $\zCESCJnoG$&    27.4566   &    27.2911   &    
            27.2212   
            &    27.2011   &    27.1960   &    27.1941   \\ \hline 
                             c\_fat200\_5  &  
            200  & 
                $\zESCJnoG$&    60.3453   &    
            60.3453   &    
            58.0000   &    58.0000   &    58.0000   &    58.0000   \\
            &    11427        &    $\zCESCJnoG$&    60.3453   &    60.3453   
            &    
            58.0000   
            &    58.0000   &    58.0000   &    58.0000   \\ \hline 
                            sanr200\_0\_9  &  
            200  & 
                $\zESCJnoG$&    49.2735   &    
            49.0222   &    
            48.9106   &    48.8432   &    48.7635   &    48.7174   \\
            &     2037       &    $\zCESCJnoG$&    49.2735   &    49.0222   
            &    
            48.9251   
            &    48.8736   &    48.8124   &    48.7893   \\ \hline 
        \end{tabular}
    \end{small}
    \end{center}
\end{table}

\begin{table}[ht]
    \caption{The running times for the results of 
    Table~\ref{tab:ttfSearchCESC}}
    \label{tab:ttfSearchCESCTime}     
    \begin{center}
        \begin{small}
        \setlength{\tabcolsep}{4pt}
        \begin{tabular}{|l|r|r|rrrrrr|}
            \hline
               Name & $n$ & & \multicolumn{6}{c|}{$J = J_\mathcal{C}$ for 
               subgraphs 
                of 
                order $k$}\\
             & $m$ &  & $k=0$ & $k=2$ & $k=3$ 
            & $k=4$ 
            & $k=5$ & $k=6$ \\    
            \hline 
                            HoG\_34272  & 
               9  & 
                    $\zESCJnoG$&    0.27   &    0.04   
            &    
            0.06   
            &    0.19   &    0.28   &    0.24   \\
            &     17       &    $\zCESCJnoG$&    0.04   &    0.03   &    0.07   
            &    
            0.15   &    0.30   &    0.28   \\ \hline 
                               HoG\_15599  &   
            20  & 
                 $\zESCJnoG$&    0.05   &    0.05   
            &    
            0.22   
            &   76.07   &  919.15   & 1507.33   \\
            &   44    &    $\zCESCJnoG$&    0.04   &    0.03   &    0.21   
            &   
            84.53   &  916.81   & 2409.14   \\ \hline 
                             CubicVT26\_5  &   
            26  & 
                  $\zESCJnoG$&    0.05   &    0.04   
            &    
            1.45   
            &  175.55   &  868.47   & 1183.23   \\
            &    39         &    $\zCESCJnoG$&    0.05   &    0.03   &    
            1.57   
            &  
            174.67   & 1132.52   & 1132.77   \\ \hline 
                              HoG\_34274  &   
            36  & 
                  $\zESCJnoG$&    0.05   &    0.06   
            &    
            8.18   
            &  234.84   &  816.22   & 2301.95   \\
            &    72        &    $\zCESCJnoG$&    0.04   &    0.06   &    9.09   
            &  
            262.06   &  802.13   & 1949.73   \\ \hline 
                                HoG\_6575  &   
            45  & 
                  $\zESCJnoG$&    0.05   &    0.05   
            &    
            0.67   
            &  126.05   &  376.49   &  653.98   \\
            &   225       &    $\zCESCJnoG$&    0.05   &    0.05   &    0.66   
            &  
            126.67   &  439.78   &  711.86   \\ \hline 
                            MANN\_a9  &   45  
            & 
                  $\zESCJnoG$&    0.04   &    0.04   
            &    
            8.14   
            &  246.63   &  997.40   & 2840.47   \\
            &   72      &    $\zCESCJnoG$&    0.06   &    0.04   &    7.46   
            &  
            252.58   & 1091.76   & 3095.15   \\ \hline 
                         Circulant47\_30  &   
            47  & 
                  $\zESCJnoG$&    0.08   &    0.07   
            &    
            9.01   
            &   88.89   &  654.73   & 2551.06   \\
            &  282       &    $\zCESCJnoG$&    0.07   &    0.06   &    8.39   
            &   
            90.83   &  654.91   & 2443.97   \\ \hline 
                              G\_50\_025  &   
            50  & 
                 $\zESCJnoG$&    0.08   &    0.19   
            &    
            3.58   
            &   50.21   &  574.86   & 2578.57   \\
            &    308       &    $\zCESCJnoG$&    0.08   &    0.17   &    3.87   
            &   
            44.95   &  541.70   & 2449.45   \\ \hline 
             G\_60\_025  &   60  & 
                 $\zESCJnoG$&    0.14   &    0.30   
            &    
            4.05   
            &   94.83   &  591.48   & 2722.16   \\
            &      450       &    $\zCESCJnoG$&    0.11   &    0.27   &    
            3.97   
            &  
            106.23   &  566.97   & 2564.81   \\ \hline 
                             Paley61  &   
            61  & 
                  $\zESCJnoG$&    0.24   &    0.22   
            &    
            0.22   
            &    0.21   &    0.22   &    0.59   \\
            &     915    &    $\zCESCJnoG$&    0.17   &    0.16   &    0.17   
            &    
            0.19   &    0.17   &    0.55   \\ \hline 
                              hamming6\_4  &   
            64  & 
              $\zESCJnoG$&    0.36   &    1.19   
            &   
            36.38   
            &  131.06   &  354.15   &  783.43   \\
            &    1312       &    $\zCESCJnoG$&    0.30   &    1.06   &   
            41.24   
            &  
            120.08   &  342.97   &  783.59   \\ \hline 
                              HoG\_34276  &   
            72  & 
                $\zESCJnoG$&    0.07   &    0.15   
            &    
            7.25   
            &  204.30   & 1671.17   & 4388.30   \\
            &   144        &    $\zCESCJnoG$&    0.06   &    0.15   &    8.07   
            &  
            245.84   & 1819.67   & 5242.20   \\ \hline 
                          G\_80\_050  &   80  
            & 
                 $\zESCJnoG$&    0.91   &    3.06   
            &   
            60.68   
            &  232.45   &  656.17   & 1531.83   \\
            &   1620   &    $\zCESCJnoG$&    0.88   &    2.67   &   57.14   
            &  
            219.95   &  689.89   & 1572.06   \\ \hline 
                             G\_100\_025  &  
            100  & 
                $\zESCJnoG$&    0.66   &    1.81   
            &   
            40.46   
            &  249.57   & 1151.43   & 2534.13   \\
            &     1243        &    $\zCESCJnoG$&    0.56   &    1.83   &   
            40.63   
            &  
            269.32   & 1125.60   & 2634.70   \\ \hline 
                                    spin5  &  
            125  & 
                $\zESCJnoG$&    0.17   &    0.17   
            &   
            19.30   
            &  261.38   & 1320.40   & 2529.39   \\
            &     375       &    $\zCESCJnoG$&    0.10   &    0.10   &   
            19.53   
            &  
            251.51   & 1578.82   & 3110.74   \\ \hline 
                             G\_150\_025  &  
            150  & 
              $\zESCJnoG$&    3.42   &    8.32   
            &   
            29.13   
            &  224.75   & 1208.97   & 2954.46   \\
            &     2835       &    $\zCESCJnoG$&    3.07   &    6.90   &   
            27.63   
            &  
            257.91   & 1463.41   & 3219.20   \\ \hline 
                                  keller4  &  
            171  & 
                $\zESCJnoG$&   11.65   &   22.97   
            &  
            200.58   
            &  677.57   & 1484.11   & 5050.42   \\
            &   5100     &    $\zCESCJnoG$&    9.47   &   24.13   &  227.81   
            &  
            711.48   & 1815.60   & 5073.47   \\ \hline 
                             G\_200\_025  &  
            200  & 
                $\zESCJnoG$&   10.02   &   18.98   
            &   
            68.21   
            &  375.73   & 1844.91   & 3730.11   \\
            &    4905      &    $\zCESCJnoG$&   10.07   &   19.69   &   74.55   
            &  
            359.11   & 2011.74   & 3947.52   \\ \hline 
                              brock200\_1  &  
            200  & 
               $\zESCJnoG$&   11.79   &   20.48   
            &   
            53.32   
            &  343.51   & 1596.36   & 3673.71   \\
            &    5066        &    $\zCESCJnoG$&    9.97   &   21.54   &   
            54.54   
            &  
            364.17   & 1801.74   & 3993.45   \\ \hline 
                             c\_fat200\_5  &  
            200  & 
               $\zESCJnoG$&   53.80   &  164.89   
            &  
            702.13   
            & 1060.21   & 3509.77   & 3669.49   \\
            &    11427      &    $\zCESCJnoG$&   37.18   &  159.80   &  
            762.12   
            & 
            1038.43   & 3252.03   & 3764.20   \\ \hline 
                            sanr200\_0\_9  &  
            200  & 
               $\zESCJnoG$&    2.18   &    5.20   
            &   
            20.46   
            &  208.74   & 1615.14   & 4255.90   \\
            &     2037      &    $\zCESCJnoG$&    1.78   &    5.57   &   
            21.65   
            &  
            211.25   & 1635.43   & 4749.57   \\ \hline 
        \end{tabular}
                \end{small}
    \end{center}
\end{table}

\end{document}